\title[]{On the $(6,4)$-problem of Brown, Erd\H{o}s and S\'os}
\author[S.~Glock]{Stefan Glock}
\author[F.~Joos]{Felix Joos}
\author[J.~Kim]{Jaehoon Kim}
\author[M.~K\"uhn]{Marcus K\"uhn}
\author[L.~Lichev]{Lyuben Lichev}
\author[O.~Pikhurko]{Oleg Pikhurko}
\address[S.~Glock]{Fakult\"at f\"ur Informatik und Mathematik, Universit\"at Passau, Germany}
\email{stefan.glock@uni-passau.de}
\address[F.~Joos, M.~K\"uhn]{Institut f\"ur Informatik, Universit\"at Heidelberg, Germany}
\email{[joos, kuehn]@informatik.uni-heidelberg.de}
\address[J.~Kim]{Department of Mathematical Sciences, KAIST, South Korea}
\email{jaehoon.kim@kaist.ac.kr}
\address[L.~Lichev]{Department of Mathematics, Universit\'e Jean Monnet, Saint-Etienne, France}
\email{lyuben.lichev@univ-st-etienne.fr}
\address[O.~Pikhurko]{Mathematics Institute and DIMAP,
University of Warwick, Coventry, United Kingdom}
\email{o.pikhurko@warwick.ac.uk}
\date{\today}
\thanks{The research leading to these results was supported by Dr.~Max R\"ossler, the Walter Haefner Foundation and the ETH Z\"urich Foundation (S. Glock), the Deutsche Forschungsgemeinschaft (DFG, German Research Foundation) -- 428212407 (F. Joos and M. K\"uhn), the POSCO Science Fellowship of POSCO TJ Park Foundation (J. Kim) as well as ERC Advanced Grant 101020255 and Leverhulme Research Project Grant RPG-2018-424 (O. Pikhurko).}
\numberwithin{equation}{section}
\definecolor{darkblue}{rgb}{0,0,0.5}
\newtheorem{theorem}[algorithm]{Theorem}
\newtheorem{lemma}[algorithm]{Lemma}
\newtheorem{cor}[algorithm]{Corollary}
\theoremstyle{definition}
\newtheorem{problem}[algorithm]{Problem}
\newtheorem{conj}[algorithm]{Conjecture}
\def\lateproof#1{\removelastskip\penalty55\medskip\noindent\begin{stepenv}\end{stepenv}{\bf Proof of #1. }} 
\def\noproof{{\unskip\nobreak\hfill\penalty50\hskip2em\hbox{}\nobreak\hfill%
       $\square$\parfillskip=0pt\finalhyphendemerits=0\par}\goodbreak}
\def\endproof{\noproof\bigskip}
\newcounter{stepenv}
\newenvironment{stepenv}[1][]{\refstepcounter{stepenv}}{}
\newcounter{step}[stepenv]
\newcounter{substep}[step]
\renewcommand{\thesubstep}{\thestep.\arabic{substep}}
\newcounter{claim}[stepenv]
\newenvironment{claim}[1][]{\refstepcounter{claim}\par\medskip\noindent%
        \textit{Claim~\theclaim. #1} \itshape\rmfamily}{\medskip}
\newcommand{\cC}{\mathcal{C}}
\newcommand{\cF}{\mathcal{F}}
\newcommand{\cY}{\mathcal{Y}}
\newcommand{\bE}{\mathbb{E}}
\def\eps{{\varepsilon}}
\newcommand{\Set}[1]{\{#1\}}
\def\In{\subseteq}
\newcommand{\ex}{{\mathrm{ex}}}
\def\COMMENT#1{}
\def\TASK#1{}
\let\TASK=\footnote             
\let\COMMENT=\footnote          
\begin{document}

\begin{abstract}  
\noindent
Let $f^{(r)}(n;s,k)$ be the maximum number of edges of an $r$-uniform hypergraph on~$n$ vertices not containing a subgraph with $k$~edges and at most $s$~vertices. In 1973, Brown, Erd\H{o}s and S\'os conjectured that the limit $$\lim_{n\to \infty} n^{-2} f^{(3)}(n;k+2,k)$$ exists for all~$k$ and confirmed it for $k=2$. Recently, Glock showed this for $k=3$. We settle the next open case, $k=4$, by showing that $f^{(3)}(n;6,4)=\left(\frac{7}{36}+o(1)\right)n^2$ as $n\to\infty$. More generally, for all $k\in \{3,4\}$, $r\ge 3$ and $t\in [2,r-1]$, we compute the value of the limit $\lim_{n\to \infty} n^{-t}f^{(r)}(n;k(r-t)+t,k)$, which settles a problem of Shangguan and Tamo.
\end{abstract}

\maketitle

\section{Introduction}

For a family $\cF$ of $r$-uniform hypergraphs (or $r$-graphs for short), let $\ex(n;\cF)$ denote the maximum number of edges in an $\cF$-free $r$-graph\footnote{An $r$-graph is \textit{$\cF$-free} if it does not contain any element of $\cF$ as a subgraph.} on $n$~vertices. This is called the \textit{Tur\'an number of $\cF$}, and determining it for various families $\cF$ is one of the central topics in extremal combinatorics. In this paper, we consider the family $\cF^{(r)}(s,k)$ of all $r$-graphs with $k$ edges and at most $s$ vertices. In 1973, Brown, Erd\H{o}s and S\'os~\cite{BES:73b} introduced the function 
$$f^{(r)}(n;s,k)=\ex(n;\cF^{(r)}(s,k)).$$
Using the probabilistic method, they showed that $f^{(r)}(n;s,k)=\Omega\left(n^{(rk-s)/(k-1)}\right)$ for all $s>r\ge 2$ and $k\ge 2$. If the exponent is an integer $t$, then $s = k(r-t) + t$ and therefore every $t$-set can be contained in at most $k-1$ edges. Hence, the above lower bound has the correct order of magnitude. 
If the exponent is not an integer, then even determining the order of magnitude of $f^{(r)}(n;s,k)$ is a major open problem, which encompasses for instance the famous $(7,4)$-problem and its generalisations (see e.g.~\cite{AS:06}).
In this paper, we focus on the case when the exponent is an integer. Then, it is natural to ask if $n^{-t}f^{(r)}(n;s,k)$ tends to a limit. Indeed, in the special case $r=3$ and $t=2$, Brown, Erd\H{o}s and S\'os~\cite{BES:73b} conjectured that the limit exists.

\begin{conj}[Brown, Erd\H{o}s and S\'os~\cite{BES:73b}] \label{conj:BES}
The limit $\lim_{n\to \infty}n^{-2}f^{(3)}(n;k+2,k)$ exists for all~$k\ge 2$.
\end{conj}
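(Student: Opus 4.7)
The plan is to attack \ref{conj:BES} one case at a time via matching upper and lower bounds, since a fully general proof appears out of reach with current methods. Concentrating first on $k=4$, I would seek an explicit construction of density close to $\frac{7}{36}$. Noting that $\frac{7}{36} = \frac{1}{6} + \frac{1}{36}$ lies just above the Steiner-triple-system density, I would start from a near-optimal partial Steiner triple system as the base layer and add a carefully chosen pattern of extra triples, perhaps organised as local gadgets on six vertices. The construction would be pushed to the target density using the R\"odl nibble combined with an absorbing argument, designed so that each added triple avoids creating a subgraph in $\cF^{(3)}(6,4)$ with the already-placed edges.

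For the matching upper bound, the route would be a supersaturation and weighted-counting argument over vertex links. Fix a $3$-graph $H$ with $\left(\frac{7}{36}+\varepsilon\right)n^2$ edges; for each vertex $v$, its link $L_v$ is a graph on $n-1$ vertices, and forbidden $(6,4)$-configurations typically appear as small dense subgraphs of $L_v$ interacting with $v$'s neighbourhood. Combining Cauchy--Schwarz bounds over link sizes with a careful inventory of the possible $(6,4)$-configurations should produce a bound of the correct order. The main obstacle will be extracting the sharp constant $\frac{7}{36}$: convexity inequalities tend to lose constant factors, so a stability step that identifies (and possibly classifies) the near-extremal structures, or a flag-algebra computation, is likely needed to close the gap.

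For the existence statement of \ref{conj:BES} in full generality, the fundamental difficulty is that neither blow-ups nor disjoint unions interact well with the forbidden family $\cF^{(3)}(k+2,k)$: blow-ups create many copies of small dense subconfigurations, and disjoint unions waste potential cross-edges. The most promising route I see is a Cauchy-sequence argument via absorbers: if the limit failed to exist, one could take two near-extremal configurations at very different scales with distinct limit densities and attempt to interleave them using an absorbing gadget, reaching a contradiction with the definition of the extremal number. Developing an absorbing framework delicate enough to control the counting of forbidden configurations in the $(k+2,k)$-regime for all $k$ is the main hurdle, and is presumably why unconditional progress on this conjecture has so far been confined to small values of $k$.
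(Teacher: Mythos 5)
Note first that what you are attacking is stated in the paper as a \emph{conjecture}: the paper itself proves it only for $k=4$ (Theorem~\ref{thm:64}, plus remarks that $k=5,7$ can be handled similarly), and points out in Section~\ref{sec discussion} that Delcourt and Postle proved the full conjecture afterwards, building on the paper's Corollary~\ref{cor:complete pack}. So the fair comparison is with the paper's progress on the $k=4$ case and its proposed reduction to Problem~\ref{prob:clean}.

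Your lower-bound plan misses the key construction. You propose augmenting a near-optimal partial Steiner triple system with local gadgets; the extremal-density construction here is \emph{not} Steiner-based. The paper identifies a single $9$-vertex, $7$-edge, $(6,4)$-free and $(4,3)$-free $3$-graph $T_7$, takes its $2$-shadow $J$ (which has $18$ edges), and packs $K_n$ edge-disjointly with copies of $J$, then places a copy of $T_7$ on top of each. The density $7/36 = 7/(2\cdot 18)$ arises from the ratio $|T_7|/|J|$, not from $1/6 + 1/36$. Crucially, the packing is carried out via the recent conflict-free hypergraph matching theorem (Theorem~\ref{thm:matching}), which keeps different copies of $T_7$ from jointly creating a $(6,4)$-configuration; this is precisely the interference issue your ``add gadgets on top of a Steiner system'' approach would run into, but the paper avoids it by design rather than with a bespoke absorber. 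Building from an STS also does not obviously work: once all pairs are covered exactly once, any additional triple produces three $(4,2)$-pairs, and it is not clear you can add $\Theta(n^2)$ extra triples without creating $(6,4)$-configurations.

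Your upper-bound plan is also the wrong route for the sharp constant. You already flag the problem yourself: Cauchy--Schwarz over links and supersaturation lose constant factors, so you would need a stability or flag-algebra step, and even then it is not clear you reach $7/36$ (indeed the averaging-over-links argument in the paper only yields the weaker $(k-1)/(3k)$ bound in~\eqref{best general}). The paper's actual argument is different in kind: first clean the $(6,4)$-free graph by deleting $O(n)$ edges to kill all $(4,3)$- and $(3,2)$-configurations (Lemma~\ref{lem:clean k=4}), then partition the remaining edges into $2$-tight components, merge components along claimed pairs into bounded-size clusters, and bound $|G|/\binom{n}{2}$ by the maximum ratio of edges to covered/claimed pairs over all possible clusters. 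That maximum is exactly $7/18$ and is attained by $T_7$. This clustering/merging idea is the missing ingredient in your sketch; without it, the upper bound stalls exactly where you predict. Finally, your ``Cauchy-sequence via absorbers at different scales'' for general $k$ is speculative in a way that does not connect to the paper's concrete reduction, which is: solve Problem~\ref{prob:clean} (remove $o(m^2)$ edges from a $(k+2,k)$-free $3$-graph to make it $(\ell+1,\ell)$-free for $\ell<k$), then Corollary~\ref{cor:complete pack} forces $\liminf = \limsup$.
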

They confirmed the conjecture for $k=2$, where the limit is~$1/6$ and the extremal examples are given by Steiner triple systems, which exist by the fundamental work of Kirkman~\cite{kirkman:47}. 
For $k=3$, they gave a lower bound of~$1/6$ and an upper bound of~$2/9$, and Glock~\cite{glock:19} showed that the limit is~$1/5$. All other cases of Conjecture~\ref{conj:BES} were open. In general, the best currently known bounds for Conjecture~\ref{conj:BES} are
\begin{align}
\frac{1}{6} \le \liminf_{n\to \infty} n^{-2}f^{(3)}(n;k+2,k) \le \limsup_{n\to \infty} n^{-2}f^{(3)}(n;k+2,k) \le \frac{k-1}{3k}, \label{best general}
\end{align}
where the lower bound follows from recent work on large girth approximate Steiner triple systems~\cite{BW:19,GKLO:18}, and the upper bound is obtained by averaging over vertex degrees and using the fact that $f^{(2)}(n-1; k+1, k) = \lfloor \tfrac{k-1}{k} (n-1) \rfloor$ (see~\cite{Erd65}).

One of our main contributions is to confirm the conjecture of Brown, Erd\H{o}s and S\'os~\cite{BES:73b} for $k=4$, which is the next open case. In fact, we can also determine the limit.
\begin{theorem}\label{thm:64}
$\lim_{n\to \infty} n^{-2} f^{(3)}(n; 6, 4) = \frac{7}{36}$.
\end{theorem}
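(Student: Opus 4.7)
The theorem combines matching lower and upper bounds. The lower bound $\liminf n^{-2} f^{(3)}(n;6,4) \ge 7/36$ requires a construction surpassing the $1/6$ yield of (approximate) Steiner triple systems; the surplus $1/36$ must come from triples arranged on pairs of codegree greater than one. The idea is to design a 3-graph combining a Pasch-free linear base with a controlled density of ``sunflower triples'' -- configurations of two or three edges of $H$ sharing a fixed pair $\{v,u\}$ -- inserted so that no $(6,4)$-configuration is ever created. Such a hybrid structure can plausibly be built by a random greedy / nibble procedure in the spirit of~\cite{BW:19,GKLO:18}, together with an absorption step to complete the construction; the constant $7/36$ then arises by optimising the fraction of high-codegree pairs subject to global $(6,4)$-avoidance.

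For the upper bound, I would start with the standard link reduction: for each vertex $v$, the link graph $L_v$ is $\cF^{(2)}(5,4)$-free, so $e(L_v) \le \lfloor 3(n-1)/4 \rfloor$, with equality characterised by vertex-disjoint unions of stars $K_{1,3}$. Naively averaging yields only $e(H) \le (1/4 + o(1))n^2$, which is too weak. The key observation driving the improvement is a rigidity statement: the extremal link structure cannot be simultaneously realised at every vertex. Indeed, if $L_v$ is a disjoint union of $K_{1,3}$'s containing a star centred at $u$ with leaves $w_1,w_2,w_3$, then the pair $\{v,u\}$ has codegree $3$, forcing $L_u$ to contain the mirror star centred at $v$ with the same leaves; in $L_{w_i}$, the edge $\{v,u\}$ then cannot belong to any $K_{1,3}$ component (since neither $v$ nor $u$ can serve as a centre there), contradicting the extremal structure at $w_i$.

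To convert this rigidity into the bound $7/36$, the plan is to define for each vertex $v$ a ``star-deficiency'' measuring how far $L_v$ falls short of the extremal disjoint-star model, together with a ``codegree profile'' recording the pairs of codegree $2$ and $3$ through $v$. The rigidity above should upgrade to a combinatorial inequality bounding the average degree in terms of these quantities; combining it with a double count of high-codegree pairs across the whole hypergraph should yield $\sum_v d(v) \le (7/12 + o(1))n^2$, which is equivalent to the claimed bound.

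The main obstacle is pinning down the constant $7/36$ rather than a weaker value. I expect the cleanest route is to formulate a fractional/LP relaxation whose variables are densities of local configurations on at most six vertices, with linear constraints encoding both the degree bound coming from links and the full $(6,4)$-avoidance condition; the optimum should then equal $7/36$, certified by an explicit dual weighting. Matching this LP optimum with a construction as outlined above is the crux of the proof, and is where I expect the bulk of the technical work to lie.
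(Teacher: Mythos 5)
Your proposal sketches the right shape of the argument (a local optimization giving $7/36$ on both sides), but both halves have genuine gaps. For the lower bound, you never identify a concrete gadget or a mechanism to pack gadgets without creating cross-gadget $(6,4)$-configurations. The paper's construction hinges on a specific $9$-vertex, $7$-edge $3$-graph $T_7$ (a ``sunflower'' of three diamonds $a_ib_ix_j$ attached to a central triple $x_1x_2x_3$), which is $(6,4)$-free, $(4,3)$-free, and whose $2$-shadow $J$ has $18$ edges. Packing $\binom{n}{2}/18$ edge-disjoint copies of $J$ and lifting each to a copy of $T_7$ gives the $7n^2/36$ count, but the packing must be done so that no four triples from different copies span at most six vertices; this is exactly what the conflict-free hypergraph matching theorem (the paper's Theorem~\ref{thm:matching}) provides, via a carefully chosen conflict system $\cC$. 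A bare nibble plus absorption, as you suggest, does not by itself exclude these cross-copy conflicts; and your ``Pasch-free linear base plus inserted sunflower triples'' is not a construction but a wish -- you never verify that $7/36$ is even achievable by such a hybrid.

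For the upper bound, your link-rigidity observation (that an extremal star-forest link at $v$ with a $K_{1,3}$ centred at $u$ forces $\{v,u\}\in L_{w_i}$ to be a non-extremal edge) is a correct local fact, and the LP-over-small-configurations idea is plausible in spirit, but you explicitly defer ``the bulk of the technical work'' and offer no dual certificate that the optimum is $7/36$ rather than something between $7/36$ and $1/4$. The paper follows a genuinely different route that actually closes this gap: first delete all edges lying in $(4,3)$- or $(3,2)$-configurations, losing only $O(n)$ edges (Lemma~\ref{lem:clean k=4}); then partition the remaining edges into $2$-tight components, merge components that ``claim'' a common pair into clusters, and show that within any cluster the ratio of edges to covered/claimed pairs is at most $7/18$, the maximum being attained by a single edge with three attached diamonds -- i.e.\ by $T_7$ itself. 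This cluster analysis is what simultaneously proves the upper bound and \emph{discovers} the gadget $T_7$ used in the lower bound. Your proposal lacks both this extremal-gadget identification and any completed argument pinning the constant; as written it is a research programme, not a proof.
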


By doing so, we develop a quite flexible approach that can be used to solve the Brown--Erd\H{o}s--S\'os problem for more general parameters.
Shortly after the appearance of~\cite{glock:19}, Shangguan and Tamo~\cite{ST20} extended the result of Glock to every uniformity $r\ge 4$ (and fixed $t=2$, $k=3$) by showing that
\begin{equation*}
   \lim_{n\to \infty} n^{-2} f^{(r)}(n; 3r-4, 3) = \tfrac{1}{r^2 - r - 1}.
\end{equation*}
Moreover, inspired by Conjecture~\ref{conj:BES}, they asked if the limit 
\begin{equation*}
   \lim_{n\to \infty} n^{-t} f^{(r)}(n; k(r-t)+t, k)
\end{equation*}
exists for all fixed $r$, $t$ and $k$, and if so, what its value is. We note that a useful interpretation of the term $k(r-t) + t$ is that it is the number of vertices in a $k$-edge $r$-graph such that each but the first edge shares $t$ vertices with the previous ones.


Throughout this paper, we assume that $k\ge 2$ and $t\in [r-1]$, as otherwise the problem is trivial. Furthermore, for $t=1$, one can easily show that 
\[\lim_{n\to \infty} n^{-1} f^{(r)}(n; k(r-1)+1, k) = \frac{k-1}{(k-1)(r-1)+1},\]
and the extremal examples are obtained as vertex-disjoint unions of loose trees with $k-1$ edges. Thus, we only consider the case $t\ge 2$ in the sequel. 

Observe that, for $k=2$, it follows from the famous theorem of R\"odl~\cite{rodl:85} on the existence of asymptotic Steiner systems that 
\begin{equation*}
   \lim_{n\to \infty} n^{-t} f^{(r)}(n; 2r-t, 2) = \frac{(r-t)!}{r!}
\end{equation*}
(see~\cite{ST20} for more details).

In this paper, we completely settle the problem when $k=3$. 
\begin{theorem}\label{thm:k=3}
For every $2\le t < r$, we have $\lim_{n\to \infty} n^{-t} f^{(r)}(n;3r-2t, 3) = \tfrac{2}{t!\left(2\tbinom{r}{t}-1\right)}$.
\end{theorem}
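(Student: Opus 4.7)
The plan is to prove matching upper and lower bounds.

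For the upper bound, consider any $\cF^{(r)}(3r-2t,3)$-free $r$-graph $H$ on $n$ vertices with $m$ edges. Every $t$-subset $T$ satisfies $d_H(T)\le 2$: if three edges all contained $T$, their union would have at most $3(r-t)+t=3r-2t$ vertices. Moreover, each edge $e$ has at most one ``partner'' $p(e)$, defined as the unique (if any) other edge of $H$ with $|e\cap p(e)|\ge t$, because two distinct such partners $e',e''$ would force $|e\cup e'\cup e''|\le(2r-t)+(r-t)=3r-2t$. Let $x$ denote the number of $t$-sets of degree $2$; then $x=\sum_{\text{partnerships}}\binom{|e\cap p(e)|}{t}$. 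Partnerships with $|e\cap p(e)|=t$ contribute $1$ each, and the total number of such partnerships is at most $m/2$. For ``high-intersection'' partnerships with $|e\cap p(e)|=s>t$, every further edge $e''$ of $H$ must satisfy $|e''\cap(e\cup p(e))|<2t-s$ to avoid forbidden triples, and this strong exclusivity implies that the total edge contribution of such partnerships is $o(n^t)$. Together with the identity $m\binom{r}{t}=\sum_T d_H(T)=y+2x$ and the bound $x+y\le\binom{n}{t}$, we obtain $m(2\binom{r}{t}-1)\le 2\binom{n}{t}+o(n^t)$, which matches the claimed limit.

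For the lower bound, the plan is to construct near-extremal hypergraphs via the conflict-free hypergraph matching theorem developed in our earlier work. Define an auxiliary hypergraph $\cA$ with vertex set $V^*:=\binom{[n]}{t}$ and, for each unordered pair $\{A,B\}$ of distinct $r$-subsets of $[n]$ with $|A\cap B|=t$, a hyperedge $\cI_{A,B}:=\{T\in V^*:T\subseteq A\text{ or }T\subseteq B\}$ of size $2\binom{r}{t}-1$. A matching in $\cA$ packs pairs of $r$-edges sharing a $t$-set; the resulting $r$-graph $H$ has every $t$-set of degree at most $2$ by construction, and one checks that $\cA$ is $\Theta(n^{2(r-t)})$-regular up to lower-order terms, with codegrees smaller by a factor of $n$. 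However, a matching alone does not prevent ``non-standard'' bad triples in $H$: Case (B), two edges of one partnership together with a third from a different partnership with $|e''\cap(e\cup p(e))|\ge t$, and (when $t\ge 3$) Case (C), three edges from three distinct partnerships with pairwise intersections $a,b,c<t$ and triple intersection $d$ satisfying $a+b+c-d\ge 2t$. Each such bad triple is encoded as a $2$- or $3$-element conflict in a conflict hypergraph $\cC$ on $E(\cA)$; applying the conflict-free matching theorem then yields a near-perfect $\cA$-matching avoiding every conflict, producing the desired $\cF^{(r)}(3r-2t,3)$-free $r$-graph of size $(1-o(1))\cdot 2\binom{n}{t}/(2\binom{r}{t}-1)$.

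The main technical obstacle is to verify the quantitative hypotheses of the conflict-free matching theorem for this setup, chiefly the bound on the maximum degree of $\cC$. This requires case analysis over all intersection patterns $(a,b,c,d)$ with $a+b+c-d\ge 2t$, $d\le\min(a,b,c)$, and $\max(a,b,c)\le t-1$ (the common-$t$-set configurations being automatically excluded by the matching property), verifying in each case that the number of conflicts through any fixed $\cA$-hyperedge grows strictly slower than $n^{2(r-t)}$. This is most delicate for $t\ge 3$, where Case (C) configurations with balanced pairwise intersections such as $a=b=c=\lceil 2t/3\rceil$ require careful bookkeeping. A secondary technical point is the $o(n^t)$ estimate for high-intersection partnerships needed in the upper bound, which requires a separate structural argument exploiting the near vertex-exclusivity of such partnerships.
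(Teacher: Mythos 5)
Your upper bound is a genuinely different (and correct) route from the paper's, which simply cites Theorem~6 of Shangguan--Tamo~\cite{ST20}. Your direct counting works: every $t$-set has degree at most $2$, every edge has at most one partner, and the degree-sum identity together with the observation that high-intersection partnerships contribute $o(n^t)$ edges gives the bound. The $o(n^t)$ estimate you defer is indeed fillable: if $\{e,p(e)\}$ has intersection $\ge t+1$ then $(3r-2t,3)$-freeness forces every other edge to meet $e\cup p(e)$ in at most $t-2$ vertices, so each $(t-1)$-set is contained in at most two edges belonging to high-intersection partnerships, giving $O(n^{t-1})$ such edges.

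Your lower bound, however, has a genuine gap. You propose to run the conflict-free matching theorem directly on the auxiliary hypergraph $\cA$ whose vertices are \emph{all} $t$-sets and whose hyperedges are the $t$-shadows $\cI_{A,B}$ of diamonds, with the conflict hypergraph $\cC$ encoding the bad triples. The problem is condition~(C3) of Theorem~\ref{thm:matching}. Fix a hyperedge $\cI_{A_1,B_1}$ and let $T$ be a ``mixed'' $t$-subset of $A_1\cup B_1$, i.e.\ one that meets both $A_1\setminus B_1$ and $B_1\setminus A_1$, so $T\notin\cI_{A_1,B_1}$ but $T\in V(\cA)$. Any $A_2$ with $T\subseteq A_2$ and $A_2\cap(A_1\cup B_1)=T$ yields a $(3r-2t,3)$-configuration $\{A_1,B_1,A_2\}$, hence a $2$-conflict $\{\cI_{A_1,B_1},\cI_{A_2,B_2}\}$ with $T\in\cI_{A_2,B_2}$. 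There are $\Theta(n^{r-t})$ such $A_2$ and $\Theta(n^{r-t})$ choices for $B_2$, so (C3) is witnessed by $\Theta(n^{2(r-t)})=\Theta(d)$ hyperedges --- far more than the required $d^{1-\eps}$. A similar blow-up occurs in (C4) when $V(F_1)\cap V(F_3)$ contains a mixed $t$-set. Your sketch only addresses the $\Delta(\cC^{(j)})$-type bounds of (C1), not the vertex-anchored bounds of (C3)/(C4), and these fail for the unmodified $\cA$.

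The paper's Theorem~\ref{thm:pack general} sidesteps exactly this by first $2$-colouring $\binom{[n]}{t}$ at random and building $H$ only on the \emph{blue} $t$-sets, admitting a copy $\cI_{A,B}$ only when its shadow is all blue and all its mixed $t$-sets are red. Then a blue $t$-set inside $V(\cI_{A_1,B_1})$ is automatically an edge of $\cI_{A_1,B_1}$, so (as formalised in Claim~\ref{claim:girth}) any $2$-conflict $\{F_1,F_2\}$ satisfies $|V(F_1)\cap V(F_2)|\ge t+1$, which kills the counting issue above and gives (C3)/(C4) for free. So your plan is salvageable, but the missing coloured pre-processing is not a cosmetic refinement: without it, the hypotheses of the conflict-free matching theorem do not hold.
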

This generalises the aforementioned results of Glock~\cite{glock:19} and Shangguan and Tamo~\cite{ST20}. We remark that the upper bound was already obtained in~\cite{ST20}, where the authors also provided a lower bound of $\frac{1}{r^t-r}$ using an algebraic construction based on a matrix-property called ``strongly 3-perfect hashing''.

Finally, we also completely settle the problem for $k=4$. Recall that the case when $r=3$ is already covered by our Theorem~\ref{thm:64}, and that the following formula does not apply in this case due to a change of behaviour when $r\ge 4$ (which is the result of an optimisation problem).
\begin{theorem}\label{thm:k=4}
For all $r\ge 4$ and $t\in [2, r-1]$, we have $\lim_{n\to \infty} n^{-t} f^{(r)}(n;4r-3t, 4) = \frac{1}{t!}\binom{r}{t}^{-1}$.
\end{theorem}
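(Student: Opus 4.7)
Our plan is to prove matching lower and upper bounds of $\frac{n^t}{t!\binom{r}{t}}$, which is asymptotically the density of a partial $(n,r,t)$-Steiner system---an $r$-graph in which every $t$-set lies in at most one edge. Throughout, let $H$ be an $n$-vertex $(4r-3t,4)$-free $r$-graph and set $d(T) = |\{e \in H : T \subset e\}|$ for each $t$-set~$T$.

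\textbf{Upper bound.} First, $d(T) \leq 3$ for every $T$: four edges through a common $t$-set form a sunflower on exactly $4r - 3t$ vertices, which is forbidden. Hence it suffices to prove that the set $U = \{T : d(T) \geq 2\}$ of \emph{heavy} $t$-sets has size $o(n^t)$, since then $|H|\binom{r}{t} = \sum_T d(T) \leq \binom{n}{t} + 2|U| = (1+o(1))\binom{n}{t}$ and the claimed upper bound follows. The key local fact is: if two edges $e, e'$ share at least $t$ vertices (which happens, for instance, whenever they jointly witness a heavy $t$-set) and a third edge $e''$ satisfies $|e'' \cap (e \cup e')| \geq t$, then $|e \cup e' \cup e''| \leq 3r - 2t$, so any fourth edge $e'''$ with $|e''' \cap (e \cup e' \cup e'')| \geq t$ completes a forbidden $(4r-3t,4)$-configuration. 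This severely restricts how linked edge-pairs can interact with further edges of $H$. Promoting this local restriction to the global estimate $|U| = o(n^t)$ is the main technical step: assuming $|U| \geq \eps n^t$, one uses a supersaturation / double-counting argument over pairs of linked edges and triples of edges spanning at most $3r-2t$ vertices to produce an actual forbidden configuration, a contradiction.

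\textbf{Lower bound.} We start from an approximate partial $(n,r,t)$-Steiner system with $(1-o(1))\binom{n}{t}/\binom{r}{t}$ edges. In any such system, pairwise edge intersections have size at most $t-1$, so inclusion--exclusion yields that any four edges span at least $4r - 6(t-1)$ vertices. Consequently a $(4r-3t,4)$-configuration arises only when all six pairwise intersections have size exactly $t-1$ and all triple intersections are empty---a very tight ``Pasch-like'' pattern which, moreover, requires $r \geq 3t - 3$ to be realisable. For small $t$ such configurations already occur with positive density in a random partial Steiner system, so a naive one-edge-per-configuration deletion is too costly. Instead, we use a \emph{high-girth} approximate Steiner system, provided by recent advances on designs, in which the offending small tight configurations are excluded by construction while the density $(1-o(1))\binom{n}{t}/\binom{r}{t}$ is retained. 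The result is a $(4r-3t,4)$-free $r$-graph of the claimed density.

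\textbf{Main obstacle.} The heart of the argument is the supersaturation estimate $|U| = o(n^t)$ in the upper bound: since each heavy $t$-set, taken in isolation, only prevents $O(1)$ nearby $t$-sets from being heavy, the global $o(n^t)$ conclusion is not immediate and requires carefully exploiting simultaneous interactions among many heavy $t$-sets to extract a forbidden configuration. The lower bound's delicate point is that, for small~$t$, a generic approximate Steiner system falls short by a constant fraction because of the abundance of tight $(4r-3t,4)$-configurations; overcoming this needs the more sophisticated high-girth Steiner systems, which are substantially more involved to construct than their classical approximate counterparts.
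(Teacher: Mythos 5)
Your lower bound is morally the same as the paper's: the paper applies its Theorem~\ref{thm:pack general} with $F$ a single $r$-edge and $J=\binom{V(F)}{t}$, and what comes out of the conflict-free matching machinery is precisely a ``high-girth'' approximate partial $(n,r,t)$-Steiner system. One small inaccuracy: your description of when a $(4r-3t,4)$-configuration can live inside a partial Steiner system (``all six pairwise intersections exactly $t-1$ and all triple intersections empty'') is only tight when $t=2$; for $t\ge 3$ one has $4r-3t>4r-6(t-1)$, so much looser intersection patterns also have to be excluded. This does not break the approach, but it is not the clean Pasch-like dichotomy you describe.

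The upper bound, however, has a genuine gap, and in fact the pivotal claim is false. You reduce the problem to showing that the set $U=\{T:\dg(T)\ge 2\}$ of heavy $t$-sets has size $o(n^t)$, but $(4r-3t,4)$-free $r$-graphs with $\Theta(n^t)$ heavy $t$-sets exist. Take $F$ to be a \emph{diamond} (two $r$-edges sharing exactly $t$ vertices, on $2r-t$ vertices) and $J=\binom{V(F)}{t}$; then $F$ is vacuously $(4r-3t,4)$- and $(3r-2t-1,3)$-free and is $(2r-t-1,2)$-free, so Corollary~\ref{cor:complete pack} (equivalently Theorem~\ref{thm:pack general}) produces a $(4r-3t,4)$-free $r$-graph consisting of $(1-o(1))\binom{n}{t}/\binom{2r-t}{t}=\Theta(n^t)$ diamonds whose $\binom{2r-t}{t}$-element $t$-shadows are pairwise edge-disjoint. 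Each diamond contributes exactly one heavy $t$-set (its common $t$-set), so $|U|=\Theta(n^t)$. For that graph your inequality $|H|\binom{r}{t}\le\binom{n}{t}+2|U|$ gives only $|H|\le(1+\Omega(1))\binom{n}{t}/\binom{r}{t}$, which is strictly weaker than the claimed bound; no amount of supersaturation can rescue the statement $|U|=o(n^t)$ because it is simply not true. (That example happens to have fewer edges than a Steiner system, so the theorem itself is safe, but your route to the theorem is blocked.) The paper instead \emph{weights} the $t$-sets: after deleting all $(3r-2t-1,3)$- and $(2r-t-1,2)$-configurations at a cost of $O(n^{t-1})$ edges (Lemma~\ref{lem:clean k=4}), it partitions the surviving edges into clusters built from $t$-tight components and a merging step, and charges to each cluster all $t$-sets it covers \emph{or claims}. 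In particular a diamond is charged with $\binom{2r-t}{t}$ $t$-sets rather than merely $2\binom{r}{t}$; since $\frac{2}{\binom{2r-t}{t}}<\frac{1}{\binom{r}{t}}$ for $r\ge 4$ (inequality~\eqref{eq X}), diamonds are actually inefficient, and maximising the edge-to-charged-$t$-set ratio over the finitely many cluster shapes (computations \eqref{eq max}, \eqref{eq max 2}) yields $\binom{r}{t}^{-1}$. That clustering-and-ratio argument is not a technical refinement of what you wrote; it is exactly what is needed to handle the $\Theta(n^t)$ heavy $t$-sets that do occur.
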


A few more sporadic results are mentioned in Section~\ref{sec discussion}.






\subsection*{Notation and terminology.} For two positive integers $m, n$, we define $[n] = \Set{1,\ldots,n}$ and $[m+1,n] = [n]\setminus [m]$. 
For a set $X$ and a nonnegative integer $s$, we refer to $\binom{X}{s}$ as the set of all subsets of $X$ of size $s$.
For an $r$-graph $G$, we denote by $V(G)$ the vertex set of $G$ and by $E(G)$ its edge set. 
We often identify $G$ with its edge set, in particular, $|G|$ denotes the number of edges of $G$. Moreover, for a set $S\subseteq V(G)$, the \textit{degree of $S$} is the number of edges of $G$ that contain it. For an integer $i\in [r]$, we denote by $\Delta_i(G)$ the \textit{maximum $i$-degree of $G$} defined as the maximum of the degrees of all $i$-subsets of $V(G)$. Moreover, for simplicity we write $\Delta$ instead of $\Delta_1$. 

Usually, the uniformity $r$ is clear from the context. So, for positive integers $s$ and $k$, an \textit{$(s,k)$-configuration} is an $r$-graph with $k$ edges and at most $s$ vertices, that is, an element of $\mathcal F^{(r)}(s,k)$. Also, we say that an $r$-graph is \textit{$(s,k)$-free} to mean that it is $\mathcal F^{(r)}(s,k)$-free. 

Finally, the \textit{$t$-shadow} of an $r$-graph $F$ is the $t$-graph with vertices $V(F)$ and edges given by all $t$-subsets of the edges of $F$.

\subsection*{Overview of the proof and plan of the paper.}
We briefly sketch the main approach. For simplicity, we stick to the setting of Theorem~\ref{thm:64} since the proofs of the other theorems use similar ideas.
In order to show the existence of $n$-vertex $3$-graphs with roughly $7n^2/36$ edges and no $(6,4)$-configuration, we proceed as follows. We find a special $3$-graph $T_7$ which has seven edges and is $(6,4)$-free (see Figure~\ref{fig:64graph} for an illustration). The main idea is to pack many copies of this $3$-graph together. More precisely, we consider the $2$-shadow $J$ of $T_7$, which has $18$ edges. We then find an asymptotically optimal packing of the complete $2$-graph $K_n$ with copies of $J$. Putting a copy of $T_7$ ``on top" of each such copy of $J$ leads to roughly $|T_7|\cdot\frac{\binom{n}{2}}{|J|}\approx \frac{7}{36}n^2$ edges in the resulting $3$-graph, as desired. The challenge is to ensure that in this packing, we do not create any $(6,4)$-configurations using edges from different copies of $T_7$. For this, a crucial ingredient in our proof is a recent result on ``conflict-free hypergraph matchings''~\cite{DP22,GJKKL22}, which we introduce in Section~\ref{sec prelim}.

In order to show that the obtained lower bound is asymptotically optimal, we proceed as follows. Let $G$ be any $(6,4)$-free $n$-vertex $3$-graph. As a first step, we show that one can remove all $(4,3)$-configurations by only deleting a negligible amount of edges. For such a ``cleaned'' $3$-graph, we show that its edges can be clustered in such a way that every pair of vertices is only associated to one cluster.
An upper bound on the total number of edges can then be obtained by maximizing the ratio of edges to associated pairs over all feasible clusters. The clustering procedure is divided into two steps. Firstly, we consider the tight components of the given (cleaned) $3$-graph. All tight components must be small since otherwise one could easily find a $(6,4)$-configuration. Then, we define a merging operation which 
combines the given components into larger clusters along common pairs of vertices. The crux is to show that by the $(6,4)$-freeness, each of the obtained clusters has bounded size. This allows us to maximize the aforementioned ratio over all possible clusters (and is in fact how we found the $3$-graph $T_7$ used in the lower bound).




\section{Preliminaries}\label{sec prelim}


In this section, we introduce \textit{conflict-free hypergraph matchings}, a general tool developed recently by Glock, Joos, Kim, K\"uhn and Lichev~\cite{GJKKL22}, and independently by Delcourt and Postle~\cite{DP22}. These works were motivated in turn by earlier results of Glock, K\"uhn, Lo and Osthus~\cite{GKLO:18} as well as Bohman and Warnke~\cite{BW:19}, who proved an approximate version of an old conjecture of Erd\H{o}s~\cite{Erd73} on the existence of high-girth Steiner triple systems. Roughly speaking, a Steiner triple system has large girth if it does not contain any $(\ell+2,\ell)$-configurations for small~$\ell$. This makes the connection to our Tur\'an problem immediate.

Let $H$ be an $N$-vertex $r$-uniform hypergraph which is almost $d$-regular and has codegree $o(d)$. Then, by a celebrated theorem of Frankl and R\"odl~\cite{FR85}, and Pippenger and Spencer~\cite{PS89}, $H$ has a matching containing $(1 - o(1))N$ of the vertices in~$H$.

Now, let in addition $\cC$ be a (not necessarily uniform) hypergraph with vertex set $E(H)$. We see $\cC$ as a set of conflicts between certain edges of $H$. More precisely, we say that a matching $M$ in $H$ is \textit{$\cC$-free} if no edge of $\cC$ is a subset of $M$. Let us provide an example: given the complete graph $K_n$, let $V(H)$ be the set of all $N = \binom{n}{2}$ edges, and $E(H)$ be the set of triples of edges which form a triangle. Then, a matching in $H$ is a triangle packing of~$K_n$. Let also $\cC$ represent the set of copies of the Pasch configuration depicted in Figure~\ref{fig Pasch}. Then, a $\cC$-free matching in $H$ would be a Pasch-free triangle packing of $K_n$.

\begin{figure}[ht]
\centering
\definecolor{ffffww}{rgb}{1,1,0.4}
\definecolor{ffqqqq}{rgb}{1,0,0}
\definecolor{yqyqyq}{rgb}{0.5019607843137255,0.5019607843137255,0.5019607843137255}
\definecolor{rvwvcq}{rgb}{0.08235294117647059,0.396078431372549,0.7529411764705882}
\begin{tikzpicture}[line cap=round,line join=round,x=1cm,y=1cm]
\clip(-8,0.9) rectangle (0,3.1);
\fill[line width=0.2pt,color=black,fill=yqyqyq,fill opacity=0.3] (-6,1) -- (-4.5,2) -- (-4,3) -- cycle;
\fill[line width=0.2pt,color=black,fill=ffqqqq,fill opacity=0.3] (-4,3) -- (-3.5,2) -- (-2,1) -- cycle;
\fill[line width=0.2pt,color=black,fill=rvwvcq,fill opacity=0.3] (-6,1) -- (-4,1) -- (-3.5,2) -- cycle;
\fill[line width=0.2pt,color=black,fill=ffffww,fill opacity=0.3] (-4,1) -- (-2,1) -- (-4.5,2) -- cycle;

\draw [line width=0.2pt,color=black] (-6,1)-- (-4.5,2);
\draw [line width=0.2pt,color=black] (-4.5,2)-- (-4,3);
\draw [line width=0.2pt,color=black] (-4,3)-- (-6,1);
\draw [line width=0.2pt,color=black] (-4,3)-- (-3.5,2);
\draw [line width=0.2pt,color=black] (-3.5,2)-- (-2,1);
\draw [line width=0.2pt,color=black] (-2,1)-- (-4,3);
\draw [line width=0.2pt,color=black] (-6,1)-- (-4,1);
\draw [line width=0.2pt,color=black] (-4,1)-- (-3.5,2);
\draw [line width=0.2pt,color=black] (-3.5,2)-- (-6,1);
\draw [line width=0.2pt,color=black] (-4,1)-- (-2,1);
\draw [line width=0.2pt,color=black] (-2,1)-- (-4.5,2);
\draw [line width=0.2pt,color=black] (-4.5,2)-- (-4,1);
\begin{scriptsize}
\draw [fill=black] (-6,1) circle (1.5pt);
\draw [fill=black] (-4.5,2) circle (1.5pt);
\draw [fill=black] (-4,3) circle (1.5pt);
\draw [fill=black] (-3.5,2) circle (1.5pt);
\draw [fill=black] (-2,1) circle (1.5pt);
\draw [fill=black] (-4,1) circle (1.5pt);
\end{scriptsize}
\end{tikzpicture}
\caption{The Pasch configuration is the above 2-regular 3-graph on six vertices and four edges.}
\label{fig Pasch}
\end{figure}
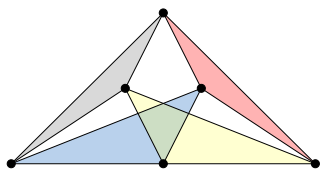


Let $H$ be as above. Assume that all edges of $\cC$ have size at most $\ell=O(1)$ and at least 2. For every $j\in [2,\ell]$, denote by $\cC^{(j)}$ the spanning subgraph of $\cC$ containing the edges of $\cC$ of size~$j$. We also assume that, for every $j\in [2,\ell]$, the maximum degree of $\cC^{(j)}$ is $O(d^{j-1})$.

For instance, in the Pasch configuration example described above, every triangle in $K_n$ (which is an edge in $H$ and therefore a vertex in $\cC$) is contained in $6\tbinom{n-3}{3}=O(d^3)$ Pasch configurations. Here, $d=n-2$ is the degree of a vertex in $H$. Moreover, edges in $\cC$ have size $4$ since the Pasch configuration consists of four triples, so indeed $\Delta(\cC^{(4)}) = O(d^3)$.

Note that the required relation $\Delta(\cC^{(j)})=O(d^{j-1})$ is natural: it implies that, in total, there are $O(Nd^j)$ conflicts of size exactly $j$. Using a probabilistic deletion method, one can select every edge of $H$ with probability $\eps/d$ for some small $\eps$ to form a (random) set $S\subseteq E(H)$. 
Then, $\bE[|S|] = \Omega(\eps N)$. Now, delete from $S$ all overlapping edges (i.e. all $e\in S$ such that, for some $f\in S$, we have $e\cap f \neq \emptyset$) as well as edges forming conflicts (i.e.\ all $e\in S$ such that, for some $C\in \cC$, we have $e\in C \text{ and }C\subseteq S$). Since the expected number of such edges is 
$$O\bigg(\frac{\eps}{d}\cdot \frac{Nd}{r}\cdot \bigg(rd\cdot \frac{\eps}{d}\bigg)\bigg) + O\bigg(\sum_{j=2}^{\ell} N d^j\cdot  \bigg(\frac{\eps}{d}\bigg)^j\bigg) = O(\eps^2 N),$$
there are on average $\Omega(\eps N)$ edges that remain after all deletions, and these form a $\cC$-free matching.

In order to get not only a linear size matching but an almost-perfect one, we need a few further codegree assumptions for $\cC$, customary to the R\"odl nibble.

\begin{theorem}[Theorem 1.3 from~\cite{GJKKL22}]\label{thm:matching}
For all~$r,\ell\geq 2$, there exists~$\eps_0>0$ such that for all~$\eps\in(0,\eps_0)$, there exists~$d_0$ such that the following holds for all~$d\geq d_0$. Let $H$ be an $r$-graph with $|V(H)|\le \exp(d^{\eps^3})$ such that every vertex is contained in $(1\pm d^{-\eps})d$ edges and~$\Delta_2(H)\le d^{1-\eps}$.

Let $\cC$ be a hypergraph with $V(\cC)=E(H)$ such that every $C\in E(\cC)$ satisfies $2\le |C|\le \ell$, and the following conditions hold.
\begin{enumerate}
	\item[$(\mathrm{C}1)$]\label{cn C1} $\Delta(\cC^{(j)})\le \ell d^{j-1}$ for all~$2\le j\le \ell$;
	\item[$(\mathrm{C}2)$]\label{cn C2} $\Delta_{j'}(\cC^{(j)})\le d^{j-j'-\eps}$ for all $2\le j'<j\le \ell$;
	\item[$(\mathrm{C}3)$]\label{cn C3} $|\{ f\in H: \{e,f\}\in \cC \text{ and } v\in f \}|\leq d^{1-\eps}$ for all~$e\in E(H)$ and~$v\in V(H)$;
	\item[$(\mathrm{C}4)$]\label{cn C4} $|\{g\in H: \{e,g\}, \{f,g\}\in \cC\}|\leq d^{1-\eps}$ for all disjoint~$e,f\in H$.
\end{enumerate}
Then, there exists a $\cC$-free matching $M$ in $H$ which covers all but $d^{-\eps^3} |V(H)|$ vertices of~$H$.
\end{theorem}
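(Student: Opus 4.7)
My plan is to prove the theorem via an iterative semi-random (Rödl nibble) argument that simultaneously controls the matching and all relevant conflict parameters. At each round I will apply a small probabilistic deletion step of the same flavour as the one-shot construction sketched just before the theorem, and then restart with the residual link hypergraph and residual conflict hypergraph.

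\emph{Single round.} In one round, each edge of $H$ is retained independently with some small probability $\gamma$, after which I discard any edge that either intersects another retained edge or that, together with other retained edges, completes a conflict $C\in\cC$. As in the heuristic already given before the theorem statement, (C1) together with $\Delta_2(H)\le d^{1-\eps}$ suffices to show that the expected number of surviving edges through a fixed vertex is $(1-O(\gamma d))\gamma d$, and that the retained edges form a $\cC$-free partial matching. To upgrade this from a statement about expectation to one about typical behaviour, I will apply Freedman's martingale inequality to each of the random variables that I need to track in the next round: the number of surviving edges through each vertex, the residual codegree of each pair, and $\Delta_{j'}(\cC^{(j)})$ for every $2\le j'\le j\le \ell$. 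The assumption $|V(H)|\le \exp(d^{\eps^3})$ is exactly what allows me to union-bound over all these bad events.

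\emph{Iteration and trickle-down.} After the round, I pass to the induced hypergraph $H'$ on the uncovered vertices and to the residual conflict hypergraph $\cC'$ obtained by removing from each $C\in\cC$ those members now placed into the partial matching (and discarding $C$ entirely if any of its members was rejected). The key structural point is that (C2) is set up with a gap $j-j'-\eps$, that is, with headroom above the natural scaling $d^{j-j'}$, precisely so that when a conflict of size $j$ shrinks to size $j'$, the resulting contribution to $\cC'^{(j')}$ still satisfies the (C1)-type bound with the updated degree parameter $d'$. The cross conditions (C3) and (C4) are likewise exactly what is needed to re-establish the vertex- and codegree bounds of $H'$ after one step. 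A careful bookkeeping then shows that after $\Theta(\eps^{-1}\log d)$ rounds the uncovered vertex set has size at most $d^{-\eps^3}|V(H)|$, at which point I stop and return the $\cC$-free matching built up across all rounds.

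\emph{Main obstacle.} The hardest part will be maintaining (C2) for all pairs $2\le j'<j\le \ell$ simultaneously throughout the iteration, since the decay rates of the various conflict parameters depend on one another in a delicate way, and each drop in the size of a conflict must happen with only a $d^{-\eps}$ slack. Handling this requires a unified inductive hypothesis in which the exponent $\eps$ is allowed to degrade slowly with the round index, and in which the constants hidden in the ``$O$'' notation are tracked carefully so that the accumulated error from $\Theta(\eps^{-1}\log d)$ iterations does not swamp the $d^{-\eps^3}$ slack in the conclusion.
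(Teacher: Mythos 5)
This statement is quoted in the paper as Theorem~1.3 from the external reference~\cite{GJKKL22}; the paper does not prove it, so there is no internal proof to compare against. Your proposal is therefore an attempt to reconstruct the argument of the cited paper, and at the level of overall strategy it does match: the known proofs (in~\cite{GJKKL22} and, with a somewhat different bookkeeping, in~\cite{DP22}) are indeed iterative semi-random (nibble) arguments that track residual degrees, codegrees, and conflict-degree parameters round by round, with concentration supplied by martingale inequalities and a union bound enabled by the hypothesis $|V(H)|\le \exp(d^{\eps^3})$.

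That said, what you have written is a plan, not a proof, and several of the steps you identify as ``the hardest part'' are precisely the points where the argument lives or dies. A few concrete issues. First, your round count does not come out right: with per-edge retention probability of order $\gamma/d$ for a small constant $\gamma$, each round covers an expected $\Theta(\gamma)$-fraction of surviving vertices, so to leave at most a $d^{-\eps^3}$-fraction uncovered you need on the order of $\gamma^{-1}\eps^3\log d$ rounds rather than $\Theta(\eps^{-1}\log d)$; getting this arithmetic consistent with the slack $d^{-\eps}$ in the hypotheses is not merely bookkeeping. Second, your description of the residual conflict hypergraph glosses over the central difficulty that a conflict of size $j$ whose members are partially placed in the matching becomes a constraint of smaller size on the surviving hypergraph, and the updated object must satisfy conditions of the \emph{same} shape as (C1)--(C4) relative to the \emph{new} degree parameter $d'$; the exponent slack $-\eps$ in (C2) is what makes this self-improvement possible, but you do not carry out the calculation, and in particular the boundary case $j'=2$ feeding into (C3)/(C4) (which are not stated as special cases of (C2)) requires a separate treatment. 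Third, and related, the ``unified inductive hypothesis in which the exponent $\eps$ is allowed to degrade slowly with the round index'' is exactly the content that would have to be supplied for the argument to close, and without stating the inductive invariant explicitly there is no way to verify that the accumulated degradation stays within the $d^{-\eps^3}$ budget. As written, the proposal identifies the right skeleton but defers every load-bearing estimate, so it should be regarded as an outline that would need the full quantitative machinery of~\cite{GJKKL22} to be completed.
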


\noindent
We remark that a version of Theorem~\ref{thm:matching} appears as Corollary~1.17 in~\cite{DP22}.

\section{Lower bounds}\label{sec LB}

To prove the lower bounds in Theorems~\ref{thm:64}, \ref{thm:k=3} and~\ref{thm:k=4}, we pack the (edges of the) complete $t$-graph $K^t_n$ with disjoint $t$-shadows of carefully chosen $r$-graphs of constant size. To start with, we explain how to deduce lower bounds on $\liminf_{n\to \infty} n^{-t}f^{(r)}(n;k(r-t)+t,k)$ in terms of certain $(k(r-t)+t,k)$-free $r$-graphs.

Given an $r$-graph $F$ and a $t$-graph $J$, we say that $J$ is a \textit{supporting $t$-graph} of $F$ if $V(J)=V(F)$ and for every edge $e\in E(F)$, all the $t$-subsets of $e$ are edges in~$J$, that is, $J$ contains the $t$-shadow of~$F$. Also, for $F$ and $J$ as above, we define the \textit{non-edge girth} of $(F,J)$ to be the smallest $g\ge 1$ for which there exists an $(g(r-t)+t, g)$-configuration in $F$ which contains (as a vertex subset) a non-edge of~$J$. If no such $g$ exists, we set the non-edge girth to be infinity.

\begin{theorem}\label{thm:pack general}
Fix $k\ge 2$, $r\ge 3$ and $t\in [2,r-1]$. Let $F$ be an $r$-graph which is $(k(r-t)+t,k)$-free and $(\ell(r-t)+t-1,\ell)$-free for all $\ell\in [2,k-1]$. Let $J$ be a supporting $t$-graph of $F$ such that the non-edge girth of $(F,J)$ is greater than $k/2$. Then, 
$$\liminf_{n\to \infty} n^{-t}f^{(r)}(n;k(r-t)+t,k) \ge \frac{|F|}{t!\,|J|}.$$
\end{theorem}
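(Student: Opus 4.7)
The plan is to pack many copies of $F$ into $[n]$ in a conflict-free way using Theorem~\ref{thm:matching}. Consider the auxiliary hypergraph $H$ with $V(H) = \binom{[n]}{t}$ and, for each injection $\phi\colon V(J)\hookrightarrow [n]$, a hyperedge $\phi(E(J)) \subseteq V(H)$ (identifying injections with the same image only affects counts by $|V(J)|! = O(1)$). Then $H$ is $|J|$-uniform, each $t$-set lies in $d = \Theta(n^{|V(J)|-t})$ hyperedges, and $\Delta_2(H) = O(d/n)$ since two distinct $t$-sets span at least $t+1$ vertices of $[n]$. A matching $M$ in $H$ corresponds to a collection of labelled copies of $J$ with pairwise disjoint edge sets, and placing a copy of $F$ on top of each yields an $r$-graph with exactly $|F|\cdot |M|$ edges. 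Define the conflict hypergraph $\cC$ on $V(\cC) = E(H)$ by declaring a set of $j\in[2,k]$ hyperedges of $H$ to be a conflict whenever the union of the associated $j$ copies of $F$ contains a $(k(r-t)+t,k)$-configuration using at least one edge from each of them. Because $F$ itself is $(k(r-t)+t,k)$-free, every conflict has size at least $2$, and a $\cC$-free matching in $H$ automatically produces a $(k(r-t)+t,k)$-free $r$-graph on $[n]$.

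The main task is then to verify the codegree conditions (C1)--(C4) for $\cC$. For (C1), fix a copy $\phi_1(F)$ and count conflicts of size $j$ through it: such a conflict is determined by a choice of a $(k(r-t)+t,k)$-configuration $K$, a splitting of its $k$ edges into $j$ nonempty groups of sizes $k_1,\dots,k_j$ (constantly many choices), and embeddings $\phi_2,\dots,\phi_j$ of $F$ consistent with the splitting. The $(\ell(r-t)+t-1,\ell)$-freeness of $F$ for $\ell\in[2,k-1]$ ensures that each group of $k_i\le k-1$ edges spans at least $k_i(r-t)+t$ vertices in $F$; combined with the total vertex bound $|V(K)|\le k(r-t)+t$, a direct count shows that each additional copy contributes at most $O(n^{|V(J)|-t}) = O(d)$ admissible embeddings, giving $O(d^{j-1})$ conflicts of size $j$, as required. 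The non-edge girth assumption enters precisely when the counting is threatened by conflicts concentrated on few vertices: if a copy's contribution of $k_i\le k/2$ edges forms a $(k_i(r-t)+t,k_i)$-configuration in $F$, then by assumption all $t$-subsets of its vertex set lie in $J$, so any overlap of two such copies of $F$ forces the corresponding copies of $J$ to share at least one $t$-edge, contradicting the matching property and eliminating these pathological conflicts from consideration. Conditions (C2)--(C4) follow from variants of the same counting argument, where each additional shared vertex supplies the required $n^{-\eps}$-factor.

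Applying Theorem~\ref{thm:matching} with these estimates yields a $\cC$-free matching $M$ in $H$ of size $(1-o(1))|V(H)|/|J| = (1-o(1))\binom{n}{t}/|J|$, and hence an $(k(r-t)+t,k)$-free $r$-graph on $n$ vertices with $|F|\cdot |M| \ge (1-o(1))\tfrac{|F|}{t!\,|J|}n^t$ edges, proving the claimed lower bound. The principal obstacle is the careful verification of (C1)--(C4), where one must show that the non-edge girth hypothesis is exactly what compensates for the potentially dense clusters of conflicts on few vertices that the direct counting alone would fail to control.
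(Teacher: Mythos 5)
Your overall plan---packing copies of $F$ via the conflict-free matching theorem---matches the paper's strategy, but two key ingredients are missing, and you invoke the non-edge girth in the wrong place. The first gap is in the construction of $H$: you take $V(H)=\binom{[n]}{t}$ and hyperedges $\phi(E(J))$ for all injections $\phi$. The paper instead first $2$-colours $\binom{[n]}{t}$ red/blue and only admits copies $J'$ of $J$ in which every edge of $J'$ is blue and every non-edge inside $\binom{V(J')}{t}$ is red. This admissibility is exactly what makes the non-edge girth hypothesis bite: if $T$ is a $t$-subset of $V(S_1)\cap V(S_2)$ and $T\in J_1$ (as forced by the girth hypothesis when $|S_1|\le k/2$), then $T$ is blue, and since $T\subseteq V(J_2)$ and $J_2$ is admissible, $T$ must also be an edge of $J_2$; hence $F_1$ and $F_2$ intersect as hyperedges of $H$. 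Without the colouring, $T$ may perfectly well be an edge of $J_1$ but a non-edge of $J_2$, in which case two copies of $F$ sharing exactly $t$ vertices are still edge-disjoint in $H$, so your statement that the matching property ``forces the corresponding copies of $J$ to share a $t$-edge, contradicting the matching'' is simply false (and indeed the girth only controls one of the two sides, namely the one with at most $k/2$ edges). This breaks the verification of (C3) and (C4), which require every size-$2$ conflict to force an intersection of size $\ge t+1$.

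The second gap is in the conflict system. You only forbid $(k(r-t)+t,k)$-configurations ($\ell=k$). The paper's $\cC$ is built from all $(\ell(r-t)+t,\ell)$-configurations with $\ell\in[2,k]$, augmented by pairs of copies intersecting in $\ge t+1$ vertices, and then restricted to inclusion-minimal conflicts. Minimality is precisely what forces $|V(S_{\le j'})|\ge \ell_{\le j'}(r-t)+t+1$ for $2\le j'\le j-1$ (because $\bigcup_{i\le j'}F_i$ is itself conflict-free), and this ``$+1$'' supplies the extra factor of $n^{-1}$ that (C2) needs. With your $\cC$ the same count only yields $\Delta_{j'}(\cC^{(j)})=O(d^{j-j'})$, which does not satisfy (C2). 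Finally, note that (C1) follows already from the $(\ell(r-t)+t-1,\ell)$-freeness and does not use the non-edge girth at all: the girth hypothesis is needed solely to control size-$2$ conflicts for (C3)/(C4), and---as explained above---there it is powerless without the colouring.
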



Before proving this result, let us see some applications.

\begin{cor}\label{cor:complete pack}
Fix $k\ge 2$, $r\ge 3$ and $t\in [2,r-1]$. Also, fix a $(k(r-t)+t,k)$-free $r$-graph $F$ on $m$ vertices which is also $(\ell(r-t)+t-1,\ell)$-free for all $\ell\in [2,k-1]$. Then, 
$$\liminf_{n\to \infty} n^{-t}f^{(r)}(n;k(r-t)+t,k) \ge \frac{|F|}{m^t}.$$
\end{cor}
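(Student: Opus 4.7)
The plan is to derive this corollary by a direct application of Theorem~\ref{thm:pack general} with the simplest possible choice of supporting $t$-graph. Specifically, I would set $J$ to be the complete $t$-graph on the vertex set $V(F)$, that is, $J = \binom{V(F)}{t}$ with $|J| = \binom{m}{t}$. Since every $t$-subset of every edge of $F$ lies in $J$ by construction, $J$ is trivially a supporting $t$-graph of~$F$.

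Next I would verify the non-edge girth condition. Since $J$ is complete, there simply are no non-edges in~$J$, so no $(\ell(r-t)+t,\ell)$-configuration in $F$ can contain a non-edge of $J$ as a vertex subset. Hence the non-edge girth of $(F,J)$ is infinity, which is in particular greater than $k/2$. The $(k(r-t)+t,k)$-freeness and the $(\ell(r-t)+t-1,\ell)$-freeness for $\ell\in[2,k-1]$ are assumed directly in the statement of the corollary, so all hypotheses of Theorem~\ref{thm:pack general} are met.

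Applying Theorem~\ref{thm:pack general} then yields
\[
\liminf_{n\to \infty} n^{-t}f^{(r)}(n;k(r-t)+t,k) \ge \frac{|F|}{t!\,|J|} = \frac{|F|}{t!\binom{m}{t}}.
\]
Finally, using the elementary inequality $t!\binom{m}{t} = m(m-1)\cdots(m-t+1) \le m^t$ gives $\frac{|F|}{t!\binom{m}{t}} \ge \frac{|F|}{m^t}$, which is the desired bound. There is no real obstacle in this argument beyond recognising that the complete $t$-graph is always a valid supporting graph and that it automatically satisfies the non-edge girth condition vacuously; the substantive work is entirely absorbed into Theorem~\ref{thm:pack general}.
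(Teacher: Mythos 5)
Your proof is correct and follows exactly the same route as the paper: take $J$ to be the complete $t$-graph on $V(F)$, observe the non-edge girth is vacuously infinite, apply Theorem~\ref{thm:pack general}, and finish with $t!\binom{m}{t}\le m^t$. The paper's version is just a terser statement of the identical argument.
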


\begin{proof}
Take $J$ to be the complete $t$-graph on $V(F)$. Then, $t!\,|J| < m^t$ and the non-edge girth is infinite, so Theorem~\ref{thm:pack general} gives the required conclusion.
\end{proof}

Note that Corollary~\ref{cor:complete pack} applied with $(t,r) = (2,3)$ ensures that a positive answer to the following problem would imply Conjecture~\ref{conj:BES}.

\begin{problem}\label{prob:clean}
Let $k\ge 3$. Given a $(k+2,k)$-free $3$-graph $F$ on $m$ vertices, can one remove $o(m^2)$ edges and make it $(\ell+1,\ell)$-free for all $\ell\in [2,k-1]$?
\end{problem}

Indeed, fix $k\ge 3$ and define $\pi_k=\limsup_{n\to \infty} n^{-2}f^{(3)}(n;k+2,k)$. Also, fix $\eps>0$. Then, by the definition of $\limsup$ there exist an arbitrarily large $m$ and a $(k+2,k)$-free $3$-graph $F'$ on $m$ vertices such that $|F'|\ge (\pi_k-\eps/2) m^2$. If one can delete $\eps m^2/2$ edges from $F'$ to obtain a 3-graph $F$ which is $(\ell+1,\ell)$-free for all $\ell\in [2,k-1]$, then applying Corollary~\ref{cor:complete pack} with $F$ shows that 
$$\liminf_{n\to \infty} n^{-2}f^{(3)}(n;k+2,k) \ge \frac{|F|}{m^2}\ge \pi_k-\eps.$$ 
Since this holds for all $\eps>0$, we obtain that $\liminf_{n\to \infty} n^{-2}f^{(3)}(n;k+2,k)=\pi_k$. Note that an analogous reduction can be done more generally for $n^{-t} f^{(r)}(n; k(r-t)+t, k)$.

As another application of Theorem~\ref{thm:pack general}, we can derive the lower bounds stated in the introduction.

\begin{proof}[Proof of the lower bounds]
We start with Theorem~\ref{thm:k=3}: let $F$ be an $r$-graph consisting of two $r$-edges intersecting in $t$ vertices, and let $J$ be its $t$-shadow. Note that $F$ is $(2(r-t)+t-1, 2)$-free and the non-edge girth of $(F,J)$ is $2 > 3/2$. Hence, by Theorem~\ref{thm:pack general} we have that
\begin{equation*}
    \liminf_{n\to \infty} n^{-t} f^{(r)}(n; 3r-2t, 3) \ge \frac{|F|}{t!\, |J|} = \frac{2}{t!\left(2\tbinom{r}{t}-1\right)}.
\end{equation*}

The lower bound in Theorem~\ref{thm:k=4} follows by applying Theorem~\ref{thm:pack general} for $F$ being a single edge and $J = \binom{V(F)}{t}$ (in which case the non-edge girth of $(F,J)$ is infinity). 

It remains to prove the lower bound in Theorem~\ref{thm:64}. Define the $3$-graph $T_7$ with vertex set $\bigcup_{i=1}^3 \{x_i,a_i,b_i\}$ and edge set 
$$\{x_1x_2x_3, a_1b_1x_2, a_1b_1x_3, a_2b_2x_1, a_2b_2x_3, a_3b_3x_1, a_3b_3x_2\},$$ 
whose supporting graph $J$ is simply its 2-shadow (see Figure~\ref{fig:64graph}); in particular, $|J|=18$. By construction $T_7$ is $(6,4)$-free and $(4,3)$-free (and, vacuously, $(3,2)$-free). Moreover, its non-edge girth is at least 3: indeed, the only $(4,2)$-configurations have vertex sets $\{a_3, b_3, x_1, x_2\}$, $\{a_1, b_1, x_2, x_3\}$ and $\{a_2, b_2, x_1, x_2\}$, which all induce copies of $K_4$ in $J$.\footnote{In fact, the non-edge girth of $T_7$ is exactly 3 because $T_7$ contains the $(5,3)$-configuration $\{x_1x_2x_3, a_2b_2x_1, a_2b_2x_3\}$ and $b_2x_2\notin J$.} Hence, applying Theorem~\ref{thm:pack general} directly yields the lower bound of Theorem~\ref{thm:64}.
\end{proof}


\begin{figure}[ht]
	\begin{center}
		\begin{tikzpicture}
			\coordinate (x) at (90:1);
			\coordinate (y) at (210:1);
			\coordinate (z) at (-30:1);
			\coordinate (a) at ($ (y) + (-90:2) $);
			\coordinate (b) at ($ (z) + (-90:2) $);
			\coordinate (c) at ($ (x) + (150:2) $);
			\coordinate (d) at ($ (y) + (150:2) $);
			\coordinate (e) at ($ (x) + (30:2) $);
			\coordinate (f) at ($ (z) + (30:2) $);
			
			\draw [fill=black] (-1.2,-0.6) node {$x_1$};
			\draw [fill=black] (1.2,-0.6) node {$x_2$};
			\draw [fill=black] (0,1.25) node {$x_3$};
			\draw [fill=black] (-1.2,-2.6) node {$a_3$};
			\draw [fill=black] (1.2,-2.6) node {$b_3$};
			\draw [fill=black] (-2.05,2) node {$a_2$};
			\draw [fill=black] (-2.9,0.5) node {$b_2$};
			\draw [fill=black] (2.05,2) node {$b_1$};
			\draw [fill=black] (2.9,0.5) node {$a_1$};
			
			\begin{scope}[opacity=0.6]
				\draw[fill,orange!60,rounded corners] (y)--(c)--(d)--cycle;
				\draw[fill,Cyan!60,rounded corners] (x)--(c)--(d)--cycle;
				\draw[fill,orange!60,rounded corners] (x)--(e)--(f)--cycle;
				\draw[fill,Cyan!60,rounded corners] (z)--(e)--(f)--cycle;
				\draw[fill,orange!60,rounded corners] (y)--(a)--(b)--cycle;
				\draw[fill,Cyan!60,rounded corners] (z)--(a)--(b)--cycle;
				\draw[fill,orange!60,rounded corners] (x)--(y)--(z)--cycle;
			\end{scope}
			
			\draw (x)--(y)--(z)--cycle;
			\draw (y)--(a)--(z)--(b)--cycle;
			\draw (a)--(b);
			\draw (x)--(c)--(y)--(d)--cycle;
			\draw (c)--(d);
			\draw (x)--(e)--(z)--(f)--cycle;
			\draw (e)--(f);
			
			
			\draw[fill] (x) circle (1.5pt);
			\draw[fill] (y) circle (1.5pt);
			\draw[fill] (z) circle (1.5pt);
			\draw[fill] (a) circle (1.5pt);
			\draw[fill] (b) circle (1.5pt);
			\draw[fill] (c) circle (1.5pt);
			\draw[fill] (d) circle (1.5pt);
			\draw[fill] (e) circle (1.5pt);
			\draw[fill] (f) circle (1.5pt);

		\end{tikzpicture}
	\end{center}
	\caption{The $3$-graph $T_7$ has seven edges and its 2-shadow contains $18$ edges. It is easy to check that this graph is $(6,4)$-free, and also $(4,3)$-free. Moreover, its non-edge girth is $3$.}\label{fig:64graph}
\end{figure}
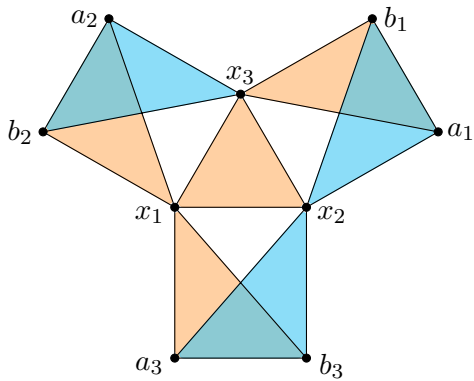

\lateproof{Theorem~\ref{thm:pack general}}
Fix $k$, $r$, $t$, $F$ and $J$ as in the statement of the theorem. Let $m=|V(F)|$ and let $\eps>0$ be arbitrarily small. In the sequel, we allow the constants in our $O(\cdot)$ notations to depend on $k$, $r$, $t$ and $\eps$. We will show that, for all sufficiently large $n$, there is a $(k(r-t)+t,k)$-free $r$-graph with vertex set $[n]$ and at least $(1-2\eps)\frac{|F|}{|J|}\binom{n}{t}$ edges, which clearly implies the result.

\subsection*{\texorpdfstring{Step 1. Defining the auxiliary hypergraph $H$.}{}}

As a first step, we $2$-colour the edges of the complete $t$-graph with vertex set $[n]$. Colour every edge (independently of all others) red with probability $\eps/2$ and blue otherwise. Given a copy $J'$ of $J$ with $V(J')\subseteq [n]$, call it \textit{admissible} if every edge of $J'$ is coloured blue and every non-edge in $\tbinom{V(J')}{t}$ is coloured red.

Now, let $H$ be a hypergraph with vertices, corresponding to the blue edges in $\tbinom{[n]}{t}$, and edges, corresponding to the admissible copies of $J$. Hence, $H$ is $|J|$-uniform. Moreover, by standard concentration arguments $H$ is with high probability approximately regular with all degrees $(1+O(d^{-\eps}))d$, where $d=c n^{m-t}$ for some positive constant $c$. In the remainder of the proof we assume that this event holds.

For every such admissible copy of $J$, we add a copy of $F$ (supported by this $J$) on top (if there are several ways to do this, pick one arbitrarily). In the sequel, we identify the edges of $H$ with these copies of~$F$; thus, several such copies form a matching if and only if their associated supporting graphs are edge-disjoint.

\subsection*{\texorpdfstring{Step 2. Defining the conflict hypergraph $\cC$.}{}}

Our main goal is to make sure that the final packing is $(k(r-t)+t,k)$-free, and hence it is natural to define $\cC_0$ as the set of all collections of copies of $F$ which form a matching in $H$ and whose union contains a $(k(r-t)+t,k)$-configuration.
Ultimately, we want to find a matching in $H$ which is $\cC_0$-free.
However, in order to verify the conditions of Theorem~\ref{thm:matching}, it will be appropriate to see $\cC_0$ as part of a bigger conflict system.

More precisely, let $\cC_1$ be the (minimal) set of all collections of copies of $F$ which form a matching in $H$ and whose union contains an $(\ell(r-t)+t,\ell)$-configuration $S$ for some $\ell\in [2,k]$ such that $S$ contains edges from at least two of the copies. Clearly $\cC_0\In \cC_1$ since no copy of $F$ contains a $(\ell(r-t)+t,\ell)$-configuration on its own. Moreover, let $\cC_2$ be the set of all pairs of copies of $F$ which intersect in at least $t+1$ vertices. (We forbid intersections of size at least $t+1$ to facilitate the verification of condition~(C4) in Theorem~\ref{thm:matching}.) Let $\cC' = \cC_1\cup \cC_2$. Finally, let $\cC$ be the set of all inclusion-wise minimal elements of~$\cC'$. Observe that if a matching in $H$ is $\cC$-free, then it is in particular $\cC_0$-free, as we need.
	
To begin with, all the conflicts in $\cC$ have size between $2$ and $k$ (so we may choose any integer larger than or equal to $k$ to play the role of $\ell$ in Theorem~\ref{thm:matching}). Below we check conditions (C1) to (C4) with the help of the following two claims.

Claim~\ref{claim:degree prep} will allow us to control the degrees of the conflict hypergraph. Here, the assumption that $F$ is $(\ell(r-t)+t-1,\ell)$-free for all $\ell\in [2,k-1]$ is crucial.

\begin{claim}\label{claim:degree prep}
Fix $j\ge 3$ and suppose that $\Set{F_1,\dots,F_j}\in \cC^{(j)}$. Then,

\[\bigg|\bigg(\bigcup_{i=2}^{j} V(F_i)\bigg)\setminus V(F_1)\bigg| \le (j-1)(m-t),\]
and for every $j'\in [2, j-1]$,
$$\bigg|\bigg(\bigcup_{i=j'+1}^{j} V(F_i)\bigg)\setminus \bigg(\bigcup_{i=1}^{j'} V(F_i)\bigg)\bigg| \le (j-j')(m-t)-1.$$
\end{claim}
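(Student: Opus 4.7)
The plan is to exploit the minimality of $\{F_1,\dots,F_j\}$ in $\cC'$ to extract a single configuration $S$ that involves \emph{every} $F_i$, and then combine vertex-counting bounds coming from the $(\ell(r-t)+t-1,\ell)$-freeness of $F$ with a simple decomposition of $V(F_1)\cup\dots\cup V(F_j)$ along $V(S)$. The witness $S$ is chosen as follows. Since $j\ge 3$ and $\cC_2$ consists only of pairs, the minimal conflict must lie in $\cC_1$, so there exist $\ell'\in[2,k]$ and an $(\ell'(r-t)+t,\ell')$-configuration $S\In\bigcup_i F_i$ using edges from at least two copies. Setting $S_i:=E(S)\cap E(F_i)$ and $\ell_i:=|S_i|$, the minimality of the conflict forces $\ell_i\ge 1$ for every $i$: otherwise $S$ would still witness $\{F_1,\dots,F_j\}\setminus\{F_{i_0}\}\in\cC_1$. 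As $\sum_i\ell_i=\ell'\le k$ and $j\ge 3$, each $\ell_i\le k-2$.

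For the first inequality, note that because $F$ is $(\ell_i(r-t)+t-1,\ell_i)$-free for $\ell_i\in[2,k-1]$ (and trivially $|V(S_i)|\ge r=(r-t)+t$ when $\ell_i=1$), we have $|V(S_i)|\ge \ell_i(r-t)+t$. Using $V(S_i)\In V(F_i)\cap V(S)$, a decomposition along $V(S)$ gives
\[ \bigg|\bigcup_{i=1}^{j}V(F_i)\bigg|\le |V(S)|+\sum_{i=1}^{j}\bigl(m-|V(S_i)|\bigr)\le \bigl(\ell'(r-t)+t\bigr)+jm-(r-t)\ell'-jt=jm-(j-1)t, \]
and subtracting $|V(F_1)|=m$ yields the first inequality.

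For the second inequality, fix $j'\in[2,j-1]$ and set $\ell_a:=\sum_{i\le j'}\ell_i$, $\ell_b:=\ell'-\ell_a$, $S_{\le j'}:=\bigcup_{i\le j'}S_i$. Since $j'\ge 2$ and every $\ell_i\ge 1$, the graph $S_{\le j'}$ has $\ell_a\in[2,k-1]$ edges drawn from at least two of $F_1,\dots,F_{j'}$. By minimality, $\{F_1,\dots,F_{j'}\}\notin\cC'\supseteq\cC_1$, so $S_{\le j'}$ cannot be an $(\ell_a(r-t)+t,\ell_a)$-configuration; hence $|V(S_{\le j'})|\ge \ell_a(r-t)+t+1$. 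Writing $W:=\bigcup_{i\le j'}V(F_i)\supseteq V(S_{\le j'})$, we have $V(S_{>j'})\setminus W\In V(S)\setminus V(S_{\le j'})$, so applying the Step~2 decomposition to the indices $i>j'$ with $V(S_{>j'})\setminus W$ playing the role of the shared core gives
\[ \bigg|\bigcup_{i>j'}V(F_i)\setminus W\bigg|\le |V(S)\setminus V(S_{\le j'})|+\sum_{i>j'}\bigl(m-|V(S_i)|\bigr)\le \bigl(\ell_b(r-t)-1\bigr)+(j-j')m-\ell_b(r-t)-(j-j')t, \]
which simplifies to $(j-j')(m-t)-1$.

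The delicate point is the verification in the last step that $\ell_a\in[2,k-1]$ (so that the $(\ell_a(r-t)+t-1,\ell_a)$-freeness of $F$ applies and the forbidden configuration size lies in the allowed range $[2,k]$), together with the fact that $S_{\le j'}$ genuinely uses edges from at least two of $F_1,\dots,F_{j'}$, which is what allows the minimality of the ambient conflict to be invoked for the strict improvement $|V(S_{\le j'})|\ge \ell_a(r-t)+t+1$. Both rely on the hypothesis $j'\ge 2$ combined with $\ell_i\ge 1$ for every~$i$, established in the first step; this is what ultimately produces the extra $-1$ absent from the first inequality.
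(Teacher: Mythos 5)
Your proof is correct and follows essentially the same route as the paper. You extract the same witness configuration $S$, note $\ell_i\ge 1$ for every $i$ by minimality, apply the $(\ell_i(r-t)+t-1,\ell_i)$-freeness to get $|V(S_i)|\ge \ell_i(r-t)+t$ (trivially for $\ell_i=1$), and obtain the crucial strict improvement $|V(S_{\le j'})|\ge \ell_a(r-t)+t+1$ for $j'\ge 2$ by observing that $S_{\le j'}$ would otherwise witness a proper subconflict inside $\{F_1,\dots,F_{j'}\}$, contradicting minimality -- exactly the paper's reasoning. The only cosmetic difference is presentational: the paper derives a single difference bound (their display~(3.3)) valid for all $j'\ge 1$ and then plugs in $|V(S_{\le 1})|\ge\ell_{\le 1}(r-t)+t$ for the first estimate versus $|V(S_{\le j'})|\ge\ell_{\le j'}(r-t)+t+1$ for $j'\ge 2$, whereas you compute the first estimate by bounding $|\bigcup_i V(F_i)|$ directly and then subtracting $m$, and handle $j'\ge2$ separately via the same set-decomposition; these are algebraically identical.
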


\begin{proof}[Proof of Claim~\ref{claim:degree prep}]
The statement follows from the minimality of the conflicts and the fact that~$F$ is~$(\ell(r-t)+t-1,\ell)$-free for all~$\ell\in[2,k]$.

In more detail, we argue as follows.
Since~$\Set{F_1,\ldots,F_j}\in\cC^{(j)}$ and $j\geq 3$, there exists an~$(\ell(r-t)+t,\ell)$-configuration~$S\subseteq \bigcup_{i=1}^j F_i$ for some~$\ell\in[2,k]$.
For~$i\in[j]$, let~$S_i=S\cap F_i$ and $\ell_i=|S_i|$, and let~$S_{\leq i}= \bigcup_{i'=1}^{i} S_{i'}$ and~$\ell_{\leq i}=|S_{\leq i}|$.
Note that~$\ell_i\geq 1$ for all~$i\in[j]$ by minimality of the conflicts, and hence also~$\ell_{\leq i}\geq 1$.
We have
\begin{equation}\label{equation: split difference bound}
\begin{aligned}
\Bigl|\Big(\bigcup_{i=j'+1}^{j} V(F_i)\Big)\setminus \bigcup_{i=1}^{j'} V(F_i)\Bigr|
&\le\Bigl|\Big(V(S)\setminus V(S_{\leq j'})\Big)\cup\bigcup_{i=j'+1}^{j} \Big(V(F_i)\setminus V(S_i)\Big)\Bigr|\\
&\leq |V(S)|-|V(S_{\leq j'})|+\sum_{i=j'+1}^{j} |V(F_i)\setminus V(S_i)|\\
&\leq \ell(r-t)+t-|V(S_{\leq j'})|+(j-j')m-\sum_{i=j'+1}^{j}|V(S_i)|.
\end{aligned}
\end{equation}

Now, we observe that 
\begin{equation}\label{equation: initial configuration part bound}
|V(S_{\leq 1})|\geq \ell_{\leq 1}(r-t)+t.
\end{equation}
Indeed, if~$\ell_{\leq 1}= 1$, then~\eqref{equation: initial configuration part bound} trivially holds, and if~$\ell_{\leq 1}\geq 2$, then~\eqref{equation: initial configuration part bound} follows by the~$(\ell_1(r-t)+t-1,\ell_1)$-freeness of~$F_1$.
Moreover, we show that for all $j'\in [2,j-1]$,
\begin{equation}\label{eq:j'ge2}
|V(S_{\leq j'})|\geq \ell_{\leq j'}(r-t)+t+1.
\end{equation}
This is implied by the minimality of the conflicts: indeed, for all $j'\in [2,j-1]$, $\bigcup_{i=1}^{j'} F_i$ contains no forbidden configuration.

We also show that for every~$i\in[j'+1,j]$,
\begin{equation}\label{equation: subsequent configuration parts bound}
|V(S_i)|\geq \ell_i(r-t)+t.
\end{equation}
Indeed, if~$\ell_i=1$, then~\eqref{equation: subsequent configuration parts bound} trivially holds, and it is otherwise implied by the~$(\ell_i(r-t)+t-1,\ell_i)$-freeness of~$F_i$.

Finally, since~$\ell_{\leq j'}+\sum_{i=j'+1}^j \ell_i\geq \ell$, combining~\eqref{equation: split difference bound},~\eqref{equation: initial configuration part bound} and~\eqref{equation: subsequent configuration parts bound} yields the desired bound.
\color{black}
\end{proof}

\begin{claim}\label{claim:girth}
If $\Set{F_1,F_2}\in \cC^{(2)}$, then $|V(F_1)\cap V(F_2)|\ge t+1$.
\end{claim}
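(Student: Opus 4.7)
The claim holds by definition if $\{F_1,F_2\}\in \cC_2$, so the plan is to handle the remaining case $\{F_1,F_2\}\in \cC_1\setminus \cC_2$. There $|V(F_1)\cap V(F_2)|\le t$, and by definition of $\cC_1$ there is an $(\ell(r-t)+t,\ell)$-configuration $S\subseteq F_1\cup F_2$ with $\ell\in[2,k]$ using edges from both copies. Let $\ell_i:=|S\cap F_i|\ge 1$; since any edge of $F$ has $r>t$ vertices, no edge lies in both $F_1$ and $F_2$, so $\ell_1+\ell_2=\ell$.

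My first step is a short inclusion--exclusion argument of the same flavour as in Claim~\ref{claim:degree prep}. Using the $(\ell_i(r-t)+t-1,\ell_i)$-freeness of $F$, I would bound $|V(S\cap F_i)|\ge \ell_i(r-t)+t$ (trivially when $\ell_i=1$), giving
\[|V(S)|\ge |V(S\cap F_1)|+|V(S\cap F_2)|-|V(F_1)\cap V(F_2)|\ge (\ell_1+\ell_2)(r-t)+2t-|V(F_1)\cap V(F_2)|.\]
Combined with $|V(S)|\le \ell(r-t)+t$ and the standing bound $|V(F_1)\cap V(F_2)|\le t$, every inequality is forced to be an equality. In particular the set $T:=V(F_1)\cap V(F_2)$ has size exactly $t$, and $T=V(S\cap F_1)\cap V(S\cap F_2)\subseteq V(S\cap F_i)$ for $i=1,2$.

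The crucial step, where the non-edge girth hypothesis enters, is a colour-consistency argument on $T$. The fixed $t$-set $T\in\binom{[n]}{t}$ receives a single colour from the random $2$-colouring in Step~1, and the admissibility of $J_i$ forces $T$ to be an edge of $J_i$ iff $T$ is blue. Hence $T$ is either an edge of \emph{both} $J_1$ and $J_2$, or an edge of \emph{neither}. The first possibility is excluded because $J_1$ and $J_2$, being part of a matching in $H$, cannot share a vertex of $H$ (i.e.\ a common blue $t$-edge). So $T$ is a non-edge of both $J_1$ and $J_2$; note in passing that this already forces $\ell_i\ge 2$, since a single edge of $F_i$ would place $T$ in the $t$-shadow of $F_i$ and thereby in $J_i$.

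Applying the non-edge girth assumption to each side then closes the argument: $S\cap F_i$ is an $(\ell_i(r-t)+t,\ell_i)$-configuration in $F_i$ containing the non-edge $T$ of $J_i$, so $\ell_i>k/2$ for $i=1,2$, and summing yields $\ell=\ell_1+\ell_2>k$, contradicting $\ell\le k$. The step I expect to require the most care is precisely this colour-consistency observation: the admissibility conditions on the two a priori unrelated copies $J_1$ and $J_2$ are tied together globally through the single $2$-colouring, and it is this link that permits the two-sided invocation of the non-edge girth hypothesis and rules out $|V(F_1)\cap V(F_2)|=t$.
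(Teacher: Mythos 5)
Your proof is correct and uses the same core ingredients as the paper: the $(\ell_i(r-t)+t-1,\ell_i)$-freeness to lower-bound $|V(S_i)|$, inclusion--exclusion, the single global $2$-colouring to tie the admissibility of $J_1$ and $J_2$ together, and the non-edge girth hypothesis. The arrangement is slightly different and worth noting. The paper fixes the side with $\ell_i\le k/2$, invokes the non-edge girth \emph{once} on that side to get $\binom{V(S_i)}{t}\subseteq J_i$, deduces $|V(S_1)\cap V(S_2)|\le t-1$ from disjointness in $H$, and thereby obtains $|V(S)|\ge \ell(r-t)+t+1$ directly. You instead reduce up front to the case $|V(F_1)\cap V(F_2)|\le t$ (the other case being $\cC_2$ by definition), force every inequality to equality so that $T:=V(F_1)\cap V(F_2)=V(S_1)\cap V(S_2)$ has size exactly $t$ and $|V(S_i)|=\ell_i(r-t)+t$, and then apply the non-edge girth \emph{symmetrically} to both $S_1$ and $S_2$ to get $\ell_1,\ell_2>k/2$ and hence $\ell>k$. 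A small advantage of your version is that the equalities you force make $S_i$ an honest $(\ell_i(r-t)+t,\ell_i)$-configuration, so the invocation of the non-edge girth is fully transparent; the paper's route is more economical but leaves the step ``$\ell_i\le k/2$ and non-edge girth $>k/2$ imply $\binom{V(S_i)}{t}\subseteq J_i$'' a bit more compressed. Both arguments are sound.
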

\begin{proof}[Proof of Claim~\ref{claim:girth}]
We show that for all~$F_1,F_2\in H$, we have~$\Set{F_1,F_2}\notin\cC_1$, which follows from the fact that the non-edge girth of~$F$ is greater than~$k/2$.

Let us turn to the details.
Fix disjoint edges~$F_1,F_2\in H$ and a subgraph~$S\subseteq F_1\cup F_2$ satisfying~$2\leq \ell=|S|\leq k$.
We show that~$|V(S)|\geq \ell(r-t)+t+1$, which in turn implies that~$\Set{F_1,F_2}\notin\cC_1$ and the claim.
Recall that for both~$i\in\Set{1,2}$, by Step 1, there is a copy $J_i$ of $J$ supporting~$F_i$, and let~$S_i=S\cap F_i$ and~$\ell_i=|S_i|$.
Since~$\ell\leq k$, we have~$|S_i|\leq k/2$ for some~$i\in\Set{1,2}$. This together with the fact that the non-edge girth of~$(F,J)$ is larger than~$k/2$ implies~$\binom{V(S_i)}{t}\subseteq J_i\subseteq V(H)$.
Since~$F_1$ and~$F_2$ are disjoint as edges of~$H$, and since~$J_1$ and~$J_2$ are both admissible copies of~$J$, this implies
\begin{equation}\label{equation: intersection bound}
|V(S_1)\cap V(S_2)|\leq t-1.
\end{equation}
(Indeed, if $|V(S_1)\cap V(S_2)|$ was at least $t$, every $t$-subset of this intersection would have belonged to $V(F_1)\cap V(F_2)$, which would contradict the fact that $F_1, F_2$ are disjoint.)
Moreover, for~$i\in\Set{1,2}$, since $\ell_i\le k-1$, by the~$(\ell_i(r-t)+t-1,\ell_i)$-freeness of~$F_i$ we have
\begin{equation}\label{equation: part bound}
|V(S_i)|\geq \ell_i(r-t)+t.
\end{equation}
Since~$\ell_1+\ell_2\geq \ell$, we combine~\eqref{equation: intersection bound} and~\eqref{equation: part bound} to conclude that
\begin{equation*}
|V(S)|
=|V(S_1)|+|V(S_2)|-|V(S_1)\cap V(S_2)|
\geq \ell(r-t) +t+1,
\end{equation*}
which completes the proof.
\color{black}
\end{proof}

We are now ready to verify the conditions of Theorem~\ref{thm:matching}. Recall that $d=\Theta(n^{m-t})$, and that every vertex of $H$ has degree $(1+O(d^{-\eps}))d$. Also, as every two distinct $t$-sets span at least $t+1$ vertices, it holds that $\Delta_2(H)\le O(n^{m-t-1})\le d^{1-\eps}$. Moreover, $|V(H)|\le n^t\le \exp(d^{\eps^3})$.

\subsection*{\texorpdfstring{Step 3. Degrees of the conflict hypergraph: verifying~(C1).}{}}
	
Let us first check the degrees in~$\cC$. Fix copies~$(F_i)_{i=1}^j$ of $F$ and suppose that $\Set{F_1,\dots,F_j}\in \cC^{(j)}$. We claim that the copies $F_2,\dots,F_j$ contain at most $(j-1)(m-t)$ vertices outside $V(F_1)$. If $j=2$, by Claim~\ref{claim:girth} the copy $F_2$ contains at most $m-t-1<m-t$ vertices outside~$F_1$. If $j\ge 3$, by Claim~\ref{claim:degree prep} the copies $F_2,\dots,F_j$ contain at most $(j-1)(m-t)$ vertices outside $V(F_1)$. Hence, the number of choices of a $j$-set $\Set{F_1,\dots,F_j}\in \cC^{(j)}$ is $O(n^{(j-1)(m-t)})$. Since $d=\Theta(n^{m-t})$, we obtain that $\Delta(\cC^{(j)})=O(d^{j-1})$, as required in~(C1).
	
\subsection*{\texorpdfstring{Step 4. Codegrees of the conflict hypergraph: verifying~(C2).
}{}}
	
Now, consider $2\le j'<j$ and copies $F_1,\dots,F_{j'}$ of $F$. Using Claim~\ref{claim:degree prep}, the number of sets $\Set{F_{j'+1},\dots,F_j}$ such that $\Set{F_{1},\dots,F_j}\in \cC^{(j)}$ is $O(n^{(j-j')(m-t)-1})=O(d^{j-j'-(m-t)^{-1}})$.

\subsection*{\texorpdfstring{Step 5. Verifying~(C3) and~(C4).}{}}

By Claim~\ref{claim:girth}, $\Set{F_1,F_2}\in \cC^{(2)}$ implies that $F_1,F_2$ intersect in at least $t+1$ vertices. Thus, for every fixed copy $F_1$, there are at most $O(n^{m-t-1}) = O(d^{1-(m-t)^{-1}})$ choices for $F_2$ that form a conflict together with $F_1$, which yields both~(C3) and~(C4) (note that fixing a vertex $v$ in (C3) or a third copy $F_3$ of $F$ such that $\{F_1, F_2\}, \{F_3, F_2\}\in \cC$ in (C4) is not even needed here).\\

\subsection*{\texorpdfstring{Step 6. Wrapping up.}{}}

Finally, we apply Theorem~\ref{thm:matching} to obtain a $\cC$-free matching $M$ in $H$ which covers all but $o(n^t)$ vertices of~$H$. Each edge in $M$ corresponds to a copy of $F$ and, by definition of $\cC$, the union of all these copies gives us an $r$-graph which is $(k(r-t)+t,k)$-free. Finally, the number of blue edges we started with is 
at least $(1-\eps)\tbinom{n}{t}$, and since we get an almost perfect matching, we know that at least $(1-2\eps)\tbinom{n}{t}$ of all edges are covered by $M$. As the corresponding copies of $J$ are edge-disjoint, this means that
$|M|\ge \tfrac{1-2\eps}{|J|}\tbinom{n}{t}$, and since every copy of $F$ contains $|F|$ edges, the desired bound follows.
\endproof

\section{Upper bounds}\label{sec UB}

In this section, we are interested in hypergraphs of arbitrary uniformity $r\ge 3$ that are $\cF^{(r)}(k(r-t)+t, k)$-free for some integers $k\in \{3,4\}$ and $t\in [2, r-1]$, that is, that do not contain subgraphs with $k$ edges spanning at most $k(r-t)+t$ vertices. The fact that the upper bound on $\limsup_{n\to \infty} n^{-t} f^{(r)}(n; 3r-2t, 3)$ stated in Theorem~\ref{thm:k=3} appears as part of Theorem~6 in~\cite{ST20} allows us to concentrate on the case $k=4$. We treat both Theorems~\ref{thm:64} and~\ref{thm:k=4} in a unified way.

The proof proceeds in several steps. We start by getting rid of configurations that are ``too dense'' by deleting a small number of edges.\\

\noindent
\textbf{Step 1: Deleting dense configurations.} We employ the following lemma, which solves the analogue of Problem~\ref{prob:clean} for~$k=4$ and arbitrary uniformities~$r$ and~$t$.
\begin{lemma}\label{lem:clean k=4}
	Let~$r\geq 3$ and~$t\in [2, r-1]$.
	Suppose~$G$ is an~$r$-graph on~$n$ vertices that is~$(4r-3t,4)$-free.
	Then, there exists a spanning $r$-graph~$G'\subseteq G$ that is~$(3r-2t-1,3)$-free,~$(2r-t-1,2)$-free and contains $|G|-O(n^{t-1})$ edges.
\end{lemma}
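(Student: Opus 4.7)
The approach is to delete edges in two rounds, each destroying one of the two forbidden configuration types while losing at most $O(n^{t-1})$ edges. First, a preliminary observation: since $G$ is $(4r-3t, 4)$-free, every $t$-subset of $V(G)$ lies in at most three edges (as four edges sharing a $t$-set span at most $t + 4(r-t) = 4r-3t$ vertices).

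\textbf{Round 1: destroy $(2r-t-1,2)$-configurations.} Such a configuration is a pair of edges $\{e,f\}$ with $|e\cap f|\geq t+1$, equivalently a $(t+1)$-subset $S$ with $d(S)\geq 2$; call such $S$ \emph{heavy}. The plan is to argue that the number of heavy $(t+1)$-subsets is $O(n^{t-2})$, so that deleting $d(S)-1\in\{1,2\}$ edges per heavy $S$ costs at most $O(n^{t-2})\leq O(n^{t-1})$ edges. The bound follows from the observation that two edge-disjoint heavy pairs span at most $4r-2t-2-|(e\cup f)\cap(e'\cup f')|$ vertices in total, which is at most $4r-3t$ once the pairs share at least $t-2$ vertices, producing a forbidden $(4r-3t,4)$-configuration. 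Hence edge-disjoint heavy pairs share at most $t-3$ vertices, and a Fisher-type packing argument on their (at most $(2r-t-1)$-element) vertex sets bounds the number of mutually-loose pairs by $O(n^{t-2})$; any remaining heavy pair must share an edge with one of these, and each $G$-edge lies in only $O(1)$ heavy pairs by the codegree bound.

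\textbf{Round 2: destroy $(3r-2t-1,3)$-configurations.} The crucial ingredient is the following \emph{isolation lemma}: for any $(3r-2t-1,3)$-configuration $C$ on vertex set $W$, every edge $f\in G\setminus C$ satisfies $|f\cap W|\leq t-2$, since otherwise $|V(C\cup\{f\})|\leq|W|+r-|f\cap W|\leq 4r-3t$ would form a forbidden configuration. The plan is then to take a maximal family $\mathcal{M}$ of edge-disjoint $(3r-2t-1,3)$-configurations and delete all $3|\mathcal{M}|$ edges appearing in some $C\in\mathcal{M}$: by maximality, every remaining configuration shares an edge with some $C\in\mathcal{M}$, hence loses that edge and is destroyed. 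To bound $|\mathcal{M}|=O(n^{t-1})$, one refines the isolation lemma via a careful analysis of how three distinct $r$-edges sit inside a vertex set of size at most $3r-2t-1$, each intersecting $W$ in at most $t-2$ vertices; this gives a sufficiently tight pairwise intersection bound on the vertex sets of edge-disjoint configurations, and Fisher-type counting then yields the desired bound.

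\textbf{Main obstacle.} The delicate step is the refined counting of edge-disjoint configurations in Round 2. A naive bound $|W\cap W'|\leq 3(t-2)$ from the isolation lemma alone gives only $O(n^{3t-5})$ edge-disjoint configurations via Fisher, which is too weak for $t\geq 3$. Tightening this requires a pigeonhole argument exploiting the isolation property together with the codegree bound on $t$-subsets, leveraging that three distinct $r$-edges in a bounded vertex set cannot all share too many vertices with an external $W$ without either collapsing (forcing two edges to coincide) or producing an edge-sharing overlap that contradicts edge-disjointness.
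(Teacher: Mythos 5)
You correctly identify the crucial \emph{isolation} fact -- that any edge $f\notin C$ meets the vertex set $W$ of a $(3r{-}2t{-}1,3)$-configuration $C$ in at most $t-2$ vertices -- and this is exactly the engine of the paper's argument as well. Your Round~1 (destroying the $(2r{-}t{-}1,2)$-configurations first) is sound, if more elaborate than necessary. But your Round~2 contains a genuine gap, which you partially acknowledge: the ``Fisher on vertex sets'' bound $|W\cap W'|\le 3(t-2)$ cannot in general be improved enough for that route to work, and your sketch of a ``refined pigeonhole'' fix is not carried out and, more importantly, does not point in the right direction. Indeed, one can have two edge-disjoint $(3r{-}2t{-}1,3)$-configurations with $|W\cap W'|$ as large as $3(t-2)$ without creating any forbidden $(4r{-}3t,4)$-configuration (the three edges of $C'$ can meet $W$ in three \emph{disjoint} $(t-2)$-sets), so there is no better uniform bound on vertex-set intersections to pigeonhole for. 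Counting by vertex subsets of $W$ is simply the wrong invariant.

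The fix is to change what you count. Let $E'$ be the set of edges of $G$ lying in some $(3r{-}2t{-}1,3)$-configuration, and count incidences between $E'$ and $(t-1)$-subsets of vertices. Fix a $(t-1)$-set $S$ and suppose $S\subseteq e$ with $e$ in a configuration $C=\{e,e_2,e_3\}$ of vertex set $W$. If $f\in E'$ also contains $S$ and $f\notin C$, then the isolation fact gives $|f\cap W|\le t-2$, contradicting $S\subseteq f\cap e\subseteq f\cap W$. Hence every $(t-1)$-set lies in at most three edges of $E'$, so $|E'|\cdot\binom{r}{t-1}\le 3\binom{n}{t-1}$, i.e.\ $|E'|=O(n^{t-1})$; one can then simply delete \emph{all} of $E'$ (no maximal edge-disjoint family is needed). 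This is precisely the paper's argument. (The paper then treats $(2r{-}t{-}1,2)$-configurations by a parallel $(t-1)$-set count, using that after the first deletion each edge is in at most one such pair; either order of the two rounds works, but the $(t-1)$-subset-of-edges counting is the key idea you are missing.)
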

\begin{proof}
We show that the deletion of all edges in $(3r-2t-1, 3)$-configurations, and then in $(2r-t-1, 2)$-configurations, leads to an overall difference of $O(n^{t-1})$ edges. 
	
First, consider a $(3r-2t-1, 3)$-configuration $\{e_1, e_2, e_3\}$. Note that every edge $e\in G\setminus \{e_1, e_2, e_3\}$ satisfies $|e\cap (e_1\cup e_2\cup e_3)|\le t-2$ since otherwise $\{e, e_1, e_2, e_3\}$ would form a $(4r-3t, 4)$-configuration. In particular, every set of $t-1$ vertices is contained in at most three edges in $(3r-2t-1, 3)$-configurations, and therefore there are $O(\tbinom{n}{t-1}) = O(n^{t-1})$ edges in such configurations. Let us delete all such edges.
	
Now, we turn to $(2r-t-1, 2)$-configurations. First of all, note that an edge $e$ cannot participate in two $(2r-t-1, 2)$-configurations $\{e,f\}$ and $\{e,g\}$ since otherwise $\{e,f,g\}$ would span at most $r + 2(r-t-1) = 3r - 2t -2$ vertices. Also, every set of $t-1$ vertices can be contained in at most two edges in $(2r-t-1, 2)$-configurations because otherwise it would be contained in two different (and hence edge-disjoint) $(2r-t-1, 2)$-configurations and therefore in a $(4r-3t-1, 4)$-configuration as well. Thus, in total there are $O(\tbinom{n}{t-1}) = O(n^{t-1})$ edges in $(2r-t-1, 2)$-configurations. By deleting all such edges, we obtain an~$r$-graph~$G'$ as desired.
\end{proof}
By Lemma~\ref{lem:clean k=4}, it suffices to obtain the required upper bounds on the numbers of edges in~$(4r-3t, 4)$-free~$r$-graphs only for those that are also~$(3r-2t-1,3)$-free and~$(2r-t-1,2)$-free.
For the subsequent steps, fix any such~$r$-graph $G$ on $n$ vertices.

\vspace{1em}

In the next step, we ``cluster'' the edges of $G$ into larger blocks in such a way that every $t$-set of vertices is associated with at most one block. This is reminiscent of the connection between the $r$-graph $F$ and its supporting $t$-graph $J$ which was central in Section~\ref{sec LB}.\\

\noindent
\textbf{Step 2: Merging.} To begin with, we iteratively combine the edges of $G$ into components as follows: starting from $|G|$ components (one for every edge in $G$), at every step we look for a pair of edges $e,f\in G$ in different components such that $|e\cap f|= t$, and if there is such a pair, merge the components of $e$ and $f$. Then, we end up with a partition of $G$ into \textit{$t$-tight components}. Note that every $t$-tight component contains at most three edges since a larger component would contain a $(4r-3t,4)$-configuration.

Now, we do the following additional mergings. For a set of $t$ vertices $S$ and an edge $e\in G$, we say that $e$ \textit{covers} $S$ if $S\subseteq e$. A \textit{diamond} is a set of two edges sharing $t$ vertices. We say that a component \textit{claims} a set $S$ of $t$ vertices if it contains a diamond $\{e_1, e_2\}$ such that $S\subseteq e_1\cup e_2$ but $S$ is not contained in any of $e_1$ and $e_2$. Iteratively and as long as possible, we merge the components such that one component claims a $t$-set covered by an edge in another component and denote by $\cY$ the final collection of components. Let us call the elements of $\cY$ \textit{clusters}.

\begin{lemma}
The collection $\cY$ consists of the $t$-tight components with two or three edges and the clusters, obtained from a single edge $e$ by attaching a number $i$ between $0$ and $\tbinom{r}{t}$ of diamonds claiming distinct $t$-subsets of $e$.
\end{lemma}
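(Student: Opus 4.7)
My plan is to analyse the merging procedure by first classifying the initial $t$-tight components and then identifying, via vertex-counting arguments exploiting the $(4r-3t,4)$-freeness of $G$, which direct merges can occur. In $G$, two edges sharing more than $t$ vertices would span at most $2r-t-1$ vertices, contradicting $(2r-t-1,2)$-freeness, so each $t$-tight component is built by successively adjoining edges that share exactly $t$ vertices with some previous edge; hence a tight component with $k$ edges spans at most $r+(k-1)(r-t)$ vertices. A $4$-edge tight component would therefore constitute a $(4r-3t,4)$-configuration, so every initial component has one, two, or three edges, i.e.\ is a single edge, a diamond, or a $3$-edge tight component.

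The technical core consists of four short volume-counting claims, each contradicting the $(4r-3t,4)$-freeness hypothesis. \emph{First}, no $3$-edge tight component $C^{*}$ (which spans at most $3r-2t$ vertices) can participate in any merge: an edge $e$ in another component covering a $t$-set claimed by a diamond of $C^{*}$ contributes at most $r-t$ new vertices, producing $4$ edges on at most $(3r-2t)+(r-t)=4r-3t$ vertices, and the symmetric case is identical. \emph{Second}, two diamonds cannot be directly merged: if diamond $D_1$ claims a $t$-set $S$ contained in some edge of diamond $D_2$, then $S\subseteq V(D_1)\cap V(D_2)$, giving $|V(D_1\cup D_2)|\leq 2(2r-t)-t=4r-3t$. \emph{Third}, a diamond $D=\{f_1,f_2\}$ cannot be attached to two distinct single-edge components $\{e\}$ and $\{e'\}$: each of $e,e'$ meets $V(D)$ in at least $t$ vertices through the respective claimed $t$-sets, so $|V(\{e,e',f_1,f_2\})|\leq(2r-t)+2(r-t)=4r-3t$. \emph{Fourth}, two distinct diamonds cannot claim the same $t$-set, by the argument of the second claim. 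I expect the main obstacle to be organising these estimates coherently, since all four saturate the bound $4r-3t$ and rely on precise bookkeeping of where the extra edges can place their vertices.

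To conclude, consider the bipartite \emph{merging graph} $M$ on the initial $t$-tight components, with an edge between two components whenever one claims a $t$-set covered by an edge of the other. The first claim isolates every $3$-edge tight component in $M$, the second forces $M$ to have edges only between single-edge components and diamonds, and the third bounds the degree of every diamond in $M$ by one. Moreover, merging a single edge $e$ with a diamond creates no new diamond, because $e$ shares fewer than $t$ vertices with each edge of the diamond (as they lie in distinct initial $t$-tight components); thus the claimed $t$-sets of an enlarged cluster equal the union of those of the diamonds it contains, and iterative merging never escapes a connected component of $M$. Consequently, each final cluster in $\cY$ corresponds to a connected component of $M$, and is either an isolated vertex of $M$ (a single edge, a diamond, or a $3$-edge tight component) or a star centred at a single edge $e$ with $i\geq 1$ diamond leaves. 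By the fourth claim, these leaves have pairwise distinct attaching $t$-subsets of $e$, so $i\leq \tbinom{r}{t}$, which recovers exactly the classification stated in the lemma.
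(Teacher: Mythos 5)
Your proof is correct and follows essentially the same strategy as the paper's: classify the initial $t$-tight components via $(2r-t-1,2)$-freeness and vertex counting, and then use $(4r-3t,4)$-freeness to restrict which merges can occur. Your four numbered claims all check out; in each, the union of the relevant edge sets has exactly four edges (the participating components being edge-disjoint) and the vertex bound $4r-3t$ is reached by routing the shared vertices through the claimed $t$-set, which is the same mechanism the paper invokes.

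There is one point where you are more explicit than the paper, and it is worth recording. The paper's proof establishes that a diamond can only merge with a single-edge component (its sentence ``every diamond may claim a $t$-set of vertices only if this set is covered by at most one edge, and this edge must be the only edge in its $t$-tight component'') and that no two diamonds claim the same $t$-set, but it does not explicitly rule out a single diamond $D$ claiming $t$-sets covered by two \emph{different} single edges $e_1,e_2$, which would create a cluster $\{e_1\}\cup D\cup\{e_2\}$ that does not fit the stated classification. Your ``third'' claim closes exactly this gap, with the count $|V(\{e_1,e_2\}\cup D)|\le (2r-t)+2(r-t)=4r-3t$. Your ``merging graph'' framing, together with the observation that merging never produces a new diamond (since edges in distinct initial $t$-tight components share fewer than $t$ vertices), is a clean way to see that the final clusters are connected components of $M$ and hence are either isolated initial components or stars centred at a single edge. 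This is a slightly more structured presentation of the same underlying argument, and it makes the bound $i\le\binom{r}{t}$ transparent via your fourth claim.
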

\begin{proof}
Note that a diamond in a $t$-tight component with three edges cannot claim a $t$-set covered by another edge, as otherwise it would contain a $(4r-3t, 4)$-configuration. Moreover, for the same reason, every diamond may claim a $t$-set of vertices only if this set is covered by at most one edge, and this edge must be the only edge in its $t$-tight component. At the same time, one $t$-set cannot be claimed by two different diamonds. Therefore, a cluster not containing a $t$-tight component on one edge is itself a $t$-tight component on two or three edges. Finally, a cluster containing a $t$-tight component on one edge, say $e$, is the union of $e$ and diamonds claiming distinct $t$-subsets of $e$.
\end{proof}

\noindent
\textbf{Step 3: The final count.} In this section, we are interested in the maximum over $F\in \cY$ of the ratio 
\begin{equation*}
    \frac{|F|}{|\{t\text{-sets covered by an edge in }F\}|+|\{t\text{-sets claimed by diamonds in }F\}|},
\end{equation*}
which is an upper bound for $|G|/\tbinom{n}{t}$ since the edges of $G$ are partitioned into clusters of $\cY$ while every $t$-set is claimed or covered at most once. 

Clearly a $t$-tight component on one edge covers $\tbinom{r}{t}$ $t$-sets, and a component in $\cY$ on two edges covers or claims $\tbinom{2r-t}{t}$ $t$-sets. Concerning a component $\{e_1, e_2, e_3\}$ on three edges in $\cY$, suppose that $|e_1\cap e_2| = t$ and $|e_2\cap e_3| = t$. Since $G$ is $(3r-2t-1,3)$-free, we have that $e_1\cap e_3\subseteq e_2$. Then, either $|e_1\cap e_2\cap e_3| = t$, in which case there are $3\tbinom{2r-t}{t} - 3\tbinom{r}{t} + 1$ covered or claimed $t$-sets, or $|e_1\cap e_2\cap e_3| < t$, in which case $\{e_1, e_3\}$ is not a diamond and the total number of covered or claimed sets is $2\tbinom{2r-t}{t} - \tbinom{r}{t}$. Moreover, for all $r\ge 3$ and $t\in [2,r-1]$ we have
\begin{equation}\label{eq X}
\begin{split}
\binom{2r-t}{t} = \frac{1}{t!} \prod_{i=0}^{t-1} (2r-t-i)
&= \frac{1}{t!} (2r-2t+1)(2r-2t+2) \prod_{i=0}^{t-3} (2r-t-i)\\
&\ge \frac{1}{t!} (2r-2t+1)(2r-2t+2) \prod_{i=0}^{t-3} (r-i) \ge 2\binom{r}{t}.
\end{split}
\end{equation}
(Note that if $t=2$, then the empty products from 0 to 
$t-3$ are both agreed to be equal to~1.) This directly implies that
\begin{equation}\label{eq max}
    \max\left\{\frac{1}{\tbinom{r}{t}}, \frac{2}{\tbinom{2r-t}{t}}, \frac{3}{3\tbinom{2r-t}{t} - 3\tbinom{r}{t} + 1}, \frac{3}{2\tbinom{2r-t}{t} - \tbinom{r}{t}}\right\} = \binom{r}{t}^{-1}.
\end{equation}

Furthermore, a cluster consisting of an edge $e$ and $i$ diamonds, each claiming a different $t$-subset of $e$, claims or covers a total of $\tbinom{r}{t} + i\left(\tbinom{2r-t}{t}-1\right)$ $t$-sets. In this case, the ratio of edges to covered or claimed $t$-sets is
\begin{equation}\label{eq max 2}
    \frac{2i+1}{\tbinom{r}{t} + i\left(\tbinom{2r-t}{t}-1\right)} = \frac{2}{\tbinom{2r-t}{t}-1} + \frac{\tbinom{2r-t}{t}-2\tbinom{r}{t}-1}{\left(\tbinom{2r-t}{t}-1\right)\left(\tbinom{r}{t} + i\left(\tbinom{2r-t}{t}-1\right)\right)}.
\end{equation}
To understand which $i$ maximises the ratio, we look at the sign of the difference $\tbinom{2r-t}{t}-2\tbinom{r}{t}-1$. Note that~\eqref{eq X} holds with equality if and only if $t=2$ (ensuring the equality in the first inequality) and $r-t=1$ (ensuring equality in the second inequality). Hence, for $(t,r) \neq (2,3)$ we have that $\tbinom{2r-t}{t}\ge 2\tbinom{r}{t}+1$, so the maximum in~\eqref{eq max 2} is attained by the term $i=0$, while for $(t,r)=(2,3)$ the maximum is attained for $i = \tbinom{r}{t} = 3$. Note that~\eqref{eq max 2} for $i=0$ is equal to the right hand side of~\eqref{eq max}.

We conclude that for every cluster $F\in \cY$, if $(t,r)\neq (2,3)$, then the maximum ratio of edges in $F$ to $t$-sets covered by an edge in $F$ or claimed by $F$ is $\tbinom{r}{t}^{-1}$, while for $(t,r) = (2,3)$ this ratio is given by $7/18$. This finishes the proof of the upper bounds.


\section{Concluding remarks}\label{sec discussion}

In this paper, we made progress towards an old problem of Brown, Erd\H{o}s and S\'os. In particular, we settled the $(6,4)$-problem for $3$-graphs. In fact, our arguments can be adapted to obtain a limit of $\tfrac{1}{5}$ in each of the cases $k=5$ and $k=7$ of Conjecture~\ref{conj:BES}. Their proofs are omitted as the further insights needed for each of them do not quite adapt to the general case and mostly require a careful case analysis.


We also suggested a way to prove Conjecture~\ref{conj:BES} in general, namely by reducing it to Problem~\ref{prob:clean}. Shortly after this paper has been made available as a preprint, 
	Delcourt and Postle~\cite{DP:22} proved Conjecture~\ref{conj:BES} in full. A major ingredient in their work is our Corollary~\ref{cor:complete pack}. Interestingly, they found a way to prove the conjecture without solving Problem~\ref{prob:clean} directly. Instead, they show that in any sufficiently dense $(k+2,k)$-free $F$, one can find a subgraph $F'$ with the same density which is $(\ell+1,\ell)$-free for all $\ell\in [2,k-1]$. We refer the reader to their paper for more details. We also note that Shangguan~\cite{Sha22} subsequently extended the proof to higher uniformities. Now that Conjecture~\ref{conj:BES} is proven, it would be interesting to determine the limiting constants.\\
	
\vspace{0.5em}
\noindent
\textbf{Acknowledgements.} We are grateful to the two anonymous referees for useful remarks and suggestions.



\normalem

\bibliographystyle{plain}
\bibliography{Refs_BES}

\end{document}